 \newcommand{\todo}[1]{}
\DeclareMathAlphabet{\mathpzc}{OT1}{pzc}{m}{it}
\DeclareSymbolFont{bbold}{U}{bbold}{m}{n}
\DeclareSymbolFontAlphabet{\mathbbold}{bbold}
\theoremstyle{plain}
\newcommand{\refnewtheoremn}[4]{
\newaliascnt{#1}{#2}
\newtheorem{#1}[#1]{#3}
\aliascntresetthe{#1}
\expandafter\providecommand\csname #1autorefname\endcsname{#4}}
\newcommand{\refnewtheorem}[3]{\refnewtheoremn{#1}{#2}{#3}{#3}}
\def\makeCal#1{
\expandafter\newcommand\csname c#1\endcsname{\mathcal{#1}}}
\def\makeBB#1{
\expandafter\newcommand\csname b#1\endcsname{\mathbb{#1}}}
\def\makeFrak#1{
\expandafter\newcommand\csname f#1\endcsname{\mathfrak{#1}}}
\edef\y{\@Alph\count@}
\newtheorem{theorem}{Theorem}[section]
\theoremstyle{definition}
\newcommand {\id}{\operatorname{id}}
\newcommand{\Stab}{\operatorname{Stab}}
\newcommand {\Hom}{\operatorname{Hom}}
\newcommand{\End}{\operatorname{End}}
\newcommand {\Aut}{\operatorname{Aut}}
\newcommand{\DT}{\operatorname{DT}}
\newcommand{\Ham}{\operatorname{Ham}}
\newcommand {\<}{\langle}
\renewcommand {\>}{\rangle}
\newcommand{\half}{\tfrac{1}{2}}
\newcommand{\tensor}{\otimes}
\renewcommand{\O}{\mathscr{O}}
\newcommand{\Lie}{\cL}
\newcommand{\isom}{\cong}
\newcommand{\mat}[4]{\begin{pmatrix}#1&#2\\#3&#4\end{pmatrix}}
\newcommand{\one}{\mathbbold {1}}
\newcommand{\hk}{hyperk{\"a}hler }
\newcommand{\fg}{\mathfrak g}
\newcommand{\fh}{\mathfrak h}
\newcommand{\fgod}{\fg^{\rm od}}
\newcommand{\GL}{\operatorname{GL}}
\newcommand{\hreg}{\fh^{{\rm reg}}}
\newcommand{\D}{\cD}
\newcommand{\sdiff}{\rm{sdiff}}
\begin{document}
\title{Complex hyperk{\"a}hler structures defined by Donaldson-Thomas invariants}
\author{Tom Bridgeland} \author{Ian A. B. Strachan}

\date{}

\begin{abstract}
The notion of a Joyce structure was introduced in \cite{RHDT2} to describe the geometric structure on the space of stability conditions of a CY$_3$ category naturally encoded by the Donaldson-Thomas invariants.
In this paper we show that a Joyce structure on a complex manifold defines a complex \hk structure on the total space of its tangent bundle, and  give a characterisation of the resulting \hk metrics in geometric  terms. 
\end{abstract}

\maketitle

\section{Introduction}

In the recent paper \cite{RHDT2} it was argued that the Donaldson-Thomas (DT) invariants  of a CY$_3$ triangulated category  $\cD$ should encode a certain geometric structure on its space of stability conditions $M=\Stab(\D)$. These structures were called Joyce structures, since the most important ingredients already appear in the paper \cite{HolGen}.  The definition of a Joyce structure on a complex manifold $M$ given in \cite{RHDT2} is rather ungeometric in nature. In this paper we explain that it can be re-interpreted as the existence of a complex \hk metric of a particular kind on the total space $X=\cT_M$ of the holomorphic tangent bundle of $M$. We include a brief summary of the most important ideas of \cite{RHDT2}  in the Appendix.

In terms of a  local system of co-ordinates $(z_1, \cdots, z_n)$ on  $M$, the main   ingredient of a Joyce structure is
a  function\footnote{This was denoted by the letter $J$ in \cite{RHDT2}, but to avoid confusion with the standard notation $I,J,K$ in \hk geometry, we switch to $W$ here.} $W\colon X\to \bC$ satisfying the partial differential equation \begin{equation}
\label{point_intro}
\frac{\partial^2 W}{\partial \theta_i \partial z_j}-\frac{\partial^2 W}{\partial \theta_j \partial z_i }=\sum_{p,q} \eta^{pq} \cdot \frac{\partial^2 W}{\partial \theta_i \partial \theta_p} \cdot \frac{\partial^2 W}{\partial \theta_j \partial \theta_q},\end{equation}
where $\eta^{pq}$ is a constant non-degenerate skew-symmetric matrix, and  $(\theta_1,\cdots,\theta_n)$ are the natural linear co-ordinates on the tangent spaces $\cT_{M,p}$ obtained by writing a tangent vector in the form $v=\sum_{i=1}^n \theta_i\cdot  \frac{\partial}{\partial z_i}$. We shall explain below that \eqref{point_intro} is the condition for the expression
\begin{equation}
\label{met}g=\sum_{i,j} \omega_{ij}  \cdot (d \theta^i \tensor dz^j+dz^j\tensor d \theta^i  )-\sum_{i,j} \frac{\partial^2 W}{\partial \theta_i\partial \theta_j} \cdot (dz^i\tensor dz^j+dz^j\tensor dz^i).\end{equation}
to define a complex \hk metric on $X$. Here $\omega_{ij}$ is the inverse matrix to $\eta^{ij}$. In terms of the basis of vector fields
\[
v_i=\frac{\partial}{\partial \theta_i},\qquad h_i= \frac{\partial}{\partial z_i} + \sum_{p,q} \eta^{pq} \cdot \frac{\partial^2 W}{\partial \theta_i \partial \theta_p} \cdot \frac{\partial}{\partial \theta_q},    \]
the complex structures $I,J,K$ are defined by the block matrices
\[I=\mat{i\cdot \one}{0}{0}{-i\cdot \one}, \qquad J=\mat{0}{-\one}{\one}{0}, \qquad K=\mat{0}{-i\cdot \one}{-i\cdot \one}{0},\]and the metric is given by $g=\sum_{i,j} \omega_{ij} \cdot (v^i\tensor h^j+h^j\tensor v^i )$.

The main characteristic of the  complex \hk structures defined by this construction is that the holomorphic 2-form
\[\Omega_-(v,w)=g(v,(J-iK)(w)),\]
on $X$ is the pull-back via the natural projection $\pi\colon X\to M$ of a holomorphic symplectic form on $M$, namely $\omega=\sum_{i,j} \omega_{ij} \, dz_i\wedge dz_j$.  Conversely, as we  show in Section 2,  all complex \hk structures with this property are locally given by the above construction for some function $W$ satisfying \eqref{point_intro}.

In a recent preprint \cite{Dunajski}, Dunajski showed that the form (\ref{met}) of the metric follows from a weaker set of conditions than complex \hk, namely null K\"ahler. Such a structure consists of a metric $g$ and an endomorphism $N$ with the property that its kernel is half-dimensional, together with the conditions
\[
g(X,NY)+g(NX,Y)=0\,,\qquad N^2=0
\]
and $\nabla N=0$ (with $\nabla$ being the Levi-Civita connection of $g$). Since $(J-iK)^2=0$ the constructions in this paper provides examples of such null-K\"ahler structures. This work also provides an explanation of the appearance of isomonodromy problems.

\subsection{Relations with previous work}

Since \hk metrics may be written in terms of a K{\"a}hler potential, the geometric conditions of being \hk result in differential equations for such a potential, and these take the form of Monge-Amp\'ere-type equations. In 4-dimensions there is a single equation which is known as Pleba\~nski's first heavenly equation \cite{Pleb} (the original motivation coming from the equivalent description of Ricci-flat metrics with anti-self-dual Weyl tensor). Many other forms of the equations exist, and equation (\ref{point_intro}), in 4-dimensions, is known as Pleba\~nski's second heavenly equation. From the work of Penrose \cite{Pen} - the original nonlinear graviton construction - it immediately follows that there is an associated twistor space, and a crucial property of this space is the family of curves with normal bundle ${\mathcal O}(1)\oplus {\mathcal O}(1)\,.$

This construction can be generalized in many ways, and the simplest is to generalize the normal bundle structure to ${\bigoplus}_{i} {\mathcal O}(n_i)$. Equation (\ref{point_intro}) first appear {\sl explicitly} in the literature in the work of Takasaki \cite{Tak}, who used the bundle of 2-forms construction of Gindikin \cite{Gin} to write down the associated hierarchies of integrable equations. Equation (\ref{point_intro}) corresponds to the case where the manifold is \hk, with the associated family of curves in the twistor space having normal bundle structure ${\mathcal O}(1)\oplus \ldots \oplus {\mathcal O}(1)$ with $2r$-terms.

With the development of the links between twistor theory and the theory of integrable systems initiated by Ward \cite{Ward}, these curved twistor space constructions and their pencils of commuting vector fields were reinterpreted in terms of Lax equations with \lq gauge fields\rq~taking values in, for example, the Lie algebra $\sdiff(\Sigma)$ of volume preserving diffeomorphims of some associated manifold $\Sigma\,$ (see, for example, \cite{MN}). Thus, for example, the original 4D self-duality equation may be interpreted as a two-dimensional $\sigma$-model \cite{Park} with fields taking values in the Lie algebra $\sdiff(\Sigma^2).$ This splitting into two distinct sets of coordinates is mirrored here with the coordinates $z_i$ on the base space $M$ and the fibre coordinates $\theta_i.$ Indeed, here the Lie algebra  $\sdiff(\Sigma^2)$ is replaced by the algebra of Poisson-preserving sympletic vector fields on the algebraic torus $\bT$, and the Lie group $\operatorname{SDiff}(\Sigma)$ by the group of automorphisms of this torus.

Connections between \hk geometry and Donaldson-Thomas theory are also not new. Gaiotto, Moore and  Neitzke  explained a beautiful connection between these subjects \cite{GMN1}, and went on  to describe explicit examples leading to the Hitchin metric on the moduli space of Higgs bundles \cite{GMN2}. The approach of \cite{RHDT2} is inspired by, and closely-related to, their work, although it is strictly different, since it deals with the `confomal limit', and hence leads to complex \hk structures rather than real ones.

As will be explained below, and as explained in \cite{RHDT2}, the function $W$ has also to satisfy certain homogeneity conditions in order for it to define a Joyce structure, and geometrically these imply the existence of a conformal (actually a homothetic) Killing vector on the \hk manifold $X=\cT_M$. In 4-dimensions, \hk manifolds with such a conformal Killing vector were studied by Dunajski and Tod \cite{DT}, with the corresponding Einstein-Weyl structures on the orbit space being constructed, following Hitchin, and the associated mini-twistor space constructed. Such constructions will obviously extend to arbitrary dimensional \hk manifolds with a conformal Killing vector, but the details do not appear to have been explicitly written down.

As well as bringing together some of the disparate sources, which span 50 years of research in several different areas, the reason for writing the paper is to give a precise geometric description of  the structure on stability space one expects to be encoded by DT theory. This is a complex \hk structure on the tangent bundle, but with certain extra features which it seems worthwhile making explicit. We also took the opportunity to give a geometric characterisation of the \hk structures arising from the above construction.

\subsection*{Notation and terminology}
We use the notation $\cT_M$ to  denote the holomorphic tangent bundle  of a complex manifold $M$. Given a holomorphic map of complex manifolds $\pi\colon X\to M$, a tangent vector $v\in \cT_{X,x}$ at a point $x\in X$ will be  called vertical  if $\pi_*(v)=0\in \cT_{M,\pi(x)}$, and  a holomorphic vector field  on $X$ will be called vertical if its value at each point  is  vertical.

A system of  co-ordinates $(z_1,\cdots,z_n)$ on a complex manifold $M$ gives natural linear co-ordinates $(\theta_1,\cdots,\theta_n)$ on the tangent spaces $\cT_{M,p}$  by writing a tangent vector in the form $v=\sum_i \theta_i\cdot  \frac{\partial}{\partial z_i}$. We thus obtain a system of co-ordinates $(z_1,\cdots, z_n,\theta_1,\cdots, \theta_n)$ on the total space of the tangent bundle $\cT_M$, which we refer to as being  induced by the co-ordinate system $(z_1,\cdots,z_n)$ on $M$.

\subsection*{Acknowledgements}
The first author is grateful to the Sydney Mathematical Research Institute for support and hospitality while this research was being carried out, and to the Royal Society for financial support. He is also grateful to Dominic Joyce for explaining the twistor construction of Section \ref{twistor}. Both authors would like to thank Maciej Dunajski for his comments on the first version of this paper.


\section{Complex hyperk{\"a}hler manifolds and affine symplectic fibrations}
\label{two}

In this section we introduce the notion of an affine symplectic fibration of  a complex \hk manifolds, and give a local description of an arbitrary complex \hk manifold admitting such a map.

\subsection{Affine symplectic fibrations }

A \emph{complex \hk structure} on  a complex manifold $X$ consists of a non-degenerate symmetric bilinear form 
$g\colon \cT_X \tensor \cT_X \to \O_X,$
together with three endomorphisms $I,J,K\in \End(\cT_X)$, which  satisfy the quaternion relations
\[I^2=J^2=K^2=IJK=-1,\]
and which are covariantly constant with respect to the holomorphic Levi-Civita connection associated to  $g$. By a \emph{complex \hk manifold}  we mean  a complex manifold equipped with a complex \hk structure. 

\begin{remark}\label{real}
Viewing the complex manifold $X$ as consisting of the underlying smooth manifold $X_{\bR}$ equipped with a complex structure $S\in \operatorname{End}(T_{X_{\bR}})$, we can use the identification $T_{X_{\bR}}\tensor_{\bR}\bC\isom \cT_X\oplus \overline{\cT}_X $ to define endomorphisms 
$I_{\bR},J_{\bR},K_{\bR}\in \End(T_{X_{\bR}})$ which commute with $S$ and satisfy the quaternion relations, together with a non-degenerate symmetric bilinear form $g_{\bR}\colon T_{X_{\bR}}\tensor T_{X_{\bR}}\to \bR$ satisfying $g_{\bR}(SX,SY)=-g_{\bR}(X,Y)$. This defines a \hk structure on the smooth manifold $X_{\bR}$ in the usual sense,  except that the metric $g_{\bR}$ is indefinite.
\end{remark}

Given a complex \hk structure on a complex manifold $X$ 
we define   a holomorphic symplectic form $\Omega_I$ on $X$ by the rule $\Omega_I(X,Y)=g(X,I(Y))$. The holomorphic symplectic forms  $\Omega_J$ and $\Omega_K$ are defined analogously.  We also introduce the combinations $\Omega_\pm=\Omega_J\pm i\Omega_K$. These are closed, holomorphic 2-forms, but are not symplectic. Indeed, since\begin{equation}
\label{cricket} g(v_1,(I\pm i)v_2)=g(K(v_1),(J\pm iK)(v_2))=\Omega_{\pm}(K(v_1),v_2),\end{equation}
the kernels of the forms $\Omega_{\pm}$ are precisely the eigenspaces of $I$ with eigenvalues $\mp i$. These two subspaces are half-dimensional, and are exchanged by the action of $J$.

\begin{definition}
 \label{fibered}
 Let $X$ be a complex \hk manifold. A holomorphic map $\pi\colon X\to M$ is an \emph{affine symplectic fibration}  if there exists a holomorphic symplectic form $\omega$ on $M$ such that $\Omega_-=\pi^*(\omega)$.
 \end{definition}
 
We call the symplectic form $\omega$ the \emph{base symplectic form} of the affine symplectic fibration.  For an explanation of the name affine symplectic see Remark \ref{name} below.
 
\begin{remark}
\label{verti}
The vertical tangent vectors for $\pi$ are clearly contained in the kernel of  the form $\Omega_-=\pi^*(\omega)$. The assumption that  $\omega$ is non-degenerate ensures that on the open dense subset where $\pi$ is a submersion this inclusion is an equality.  But by \eqref{cricket}, the kernel of $\Omega_-$  also coincides with the $+i$ eigenspace of the operator $I$, and is therefore everywhere half-dimensional. It follows that $\pi$ is  a submersion with half-dimensional fibres. \end{remark}

 Let $\cV_\pi\subset \cT_{X}$ denote the bundle of vertical tangent vectors for the map $\pi\colon X\to M$. Consider a nonzero vertical tangent vector  $v\in \cT_{X,x}$  at some point $x\in X$.   Since $J$ switches the two eigenspaces of $I$, the tangent vector $J(v)$ is not vertical. Thus the map $v\mapsto \pi_*(J(v))$  induces an isomorphism $\cV_{\pi,x}\to \cT_{M,\pi(x)}$. Putting these maps together gives a bundle isomorphism \[b\colon \cV_\pi\to \pi^*(\cT_M)\] which we will call the \emph{basing map} of the affine symplectic fibration.

\begin{remark}
\label{name}
 Note that by \eqref{cricket}, the kernel of the form $\Omega_+$ is precisely the $-i$ eigenspace of $I$. Thus $\Omega_+$ restricts to a holomorphic symplectic form on each fibre of the map $\pi\colon X\to M$. 
 The basing map relates this symplectic form to the symplectic form $\omega$ on the base $M$. Indeed, if $v_1,v_2\in \cT_{X,x}$ are vertical tangent vectors, then since $(J-iK)J=J(J+iK)$, we have\[\Omega_+(v_1,v_2)=\Omega_-(J(v_1),J(v_2))=(\pi^*\omega)(J(v_1),J(v_2))=\omega(b(v_1),b(v_2)).\]
 This now gives the reason for the name affine symplectic fibration: any given fibre $\pi^{-1}(m)$ of the map $\pi\colon X\to M$ has the property that each of its tangent spaces is identified via the basing map with the fixed symplectic vector space $\cT_{M,m}$.
 \end{remark}

\subsection{Standard example}
\label{eg}Let $n\geq 2$ be an even integer, and take $M=\bC^n$. Let $X$ denote the total space of the holomorphic tangent bundle $\cT_M$, with its canonical projection  $\pi\colon X\to M$. Let  $(z_1,\cdots,z_n)$ be standard linear co-ordinates on  $M$, and let $(z_1, \cdots, z_n,\theta_1,\cdots,\theta_n)$ be the induced co-ordinate system on $X$, as explained in the notation and terminology section.

Choose a  $n\times n$ non-degenerate, skew-symmetric matrix $\omega_{pq}$ and introduce the holomorphic symplectic form
\[\omega = \sum_{p,q} \omega_{pq} \cdot dz^p \wedge dz^q,\]
on $M$.
Let $X^0\subset X$ be an open subset, and  
let $W\colon X^0 \to \bC$ be a holomorphic function  satisfying the partial differential equations
\begin{equation}
\label{pde}
\frac{\partial}{\partial \theta_k} \bigg(\frac{\partial^2 W}{\partial \theta_i \partial z_j }-\frac{\partial^2 W}{\partial \theta_j  \partial z_i }-\sum_{p,q} \eta^{pq} \cdot \frac{\partial^2 W}{\partial \theta_i \partial \theta_p} \cdot \frac{\partial^2 W}{\partial \theta_j \partial \theta_q}\bigg)=0,\end{equation}
where $\eta^{ij}$ is the inverse matrix to $\omega_{ij}$. Then define vector fields
\begin{equation}
\label{above}v_i= \frac{\partial}{\partial \theta_i}, \qquad h_i= \frac{\partial}{\partial z_i} + \sum_{p,q} \eta^{pq} \cdot \frac{\partial^2 W}{\partial \theta_i \partial \theta_p} \cdot \frac{\partial}{\partial \theta_q}.\end{equation}

We define a complex \hk structure on $X^0$ by setting 
\begin{equation}
\label{ijk1}I(v_i)=i\cdot v_i,   \qquad  J(v_i)=h_i,   \qquad K(v_i)=-i h_i, \end{equation}
\begin{equation}
\label{ijk2}I(h_i)=-i\cdot h_i, \qquad J(h_i)=-v_i,  \qquad K(h_i)=-i v_i.\end{equation}
\begin{equation}
\label{g}g(v_i,v_j)=0, \qquad g(v_i,h_j)=\omega_{ij}, \qquad g(h_i,h_j)=0.\end{equation}
The quaternion relations are immediate, as is the fact that $I$, $J$ and $K$ preserve the metric. It is clear that  $[v_i,v_j]=0$, and 
the  equation \eqref{pde}  implies that $[h_i,h_j]=0$. Note that
\[g([v_i,h_j],h_k)=\frac{\partial^3 W}{\partial \theta_i \partial \theta_j \partial \theta_k}.\]
Using the Koszul formula to compute the Levi-Civita connection, most terms vanish, and we find that
\begin{equation}
\label{blah}\nabla_{h_i} (h_j)=-\sum_{p,q} \eta^{pq}\cdot \frac{\partial^3 W}{\partial \theta_i \partial \theta_j \partial \theta_p}\cdot h_q, \qquad \nabla_{h_i} (v_j)=-\sum_{p,q} \eta^{pq} \cdot \frac{\partial^3 W}{\partial \theta_i \partial \theta_j \partial \theta_p}\cdot v_q,\end{equation}
and $\nabla_{v_i} (h_j)=0= \nabla_{v_i} (v_j)$.
It is then easy to check  that $\nabla$ preserves $I$, $J$ and $K$.

The associated holomorphic  symplectic forms are
\[\Omega_I=-i \cdot \sum_{p,q} \omega_{pq} \cdot v^p\wedge h^q, \]\[ \Omega_J=\frac{1}{2}\cdot \sum_{p,q} \omega_{pq} (v^p\wedge v^q + h^p \wedge h^q), \qquad \Omega_K= -\frac{i}{2}\cdot \sum _{p,q} \omega_{pq} (v^p\wedge v^q - h^p \wedge h^q) \]
where we used the dual bases of covectors
\[h^j= dz^j, \qquad v^j= d\theta^j + \sum_{r,s} \eta^{jr} \cdot \frac{\partial^2 W}{\partial \theta_r \partial \theta_s} \cdot dz^s\]
so that $(h^j,v_i)=0=(v^j,h_i)$ and $(h^j,h_i)=\delta_{ij}=(v^j,v_i)$. In particular 
\[\Omega_-=\Omega_J-i\Omega_K=\sum _{p,q} \omega_{pq} \cdot h^p \wedge h^q=\pi^*(\omega),\]
which shows that the restriction $\pi\colon X^0\to M$ is an affine symplectic fibration.

\begin{remark}
\label{pemb}
Note that since only the second derivatives of  the function $W$ with respect to the fibre variables $\theta_i$ play any role in the definition of the \hk structure, this function is only well-defined up to the addition of functions which are at most quadratic in the fibre directions. The transformation  of $W$ under symplectic co-ordinate changes $(z_1,\cdots,z_n)\mapsto (w_1,\cdots, w_n)$ on the base $M$ is written out explicitly in \cite[Section 4.2]{A2}. In addition to the obvious substitutions, the function $W$ picks up an extra term which is cubic in the fibre directions.
\end{remark}

\begin{remark}
\label{curv}
A calculation\todo{Is there an easier way to see this?} with the equation \eqref{pde} shows that the curvature component \[R(h_i,h_j)=\nabla_{h_i}\circ \nabla_{h_j}-\nabla_{h_j}\circ \nabla_{h_i}=0,\] and it is immediate that  $R(v_i,v_j)=0$. On the other hand
\begin{equation*}
R(h_j,v_i)(h_k)=\sum_{p,q} \eta^{pq} \cdot \frac{\partial^4 W}{\partial \theta_i \partial \theta_j \partial \theta_k \partial \theta_p}\cdot h_q, \qquad R(h_j,v_i)(v_k)=\sum_{p,q} \eta^{pq} \cdot \frac{\partial^4 W}{\partial \theta_i \partial \theta_j \partial \theta_k \partial \theta_p}\cdot v_q.\end{equation*}
Thus we conclude that the metric is flat precisely if the restriction of the function $W\colon X\to \bC$ to each fibre $\pi^{-1}(m)=\cT_{M,p}$  is a polynomial function of degree $\leq 3$. \end{remark}
 
 \subsection{Normalised affine symplectic fibrations}

 Consider the  total space $X=\cT_M$  of the holomorphic tangent bundle of  $M$, with its natural projection $\pi\colon X\to M$. Denote by $\cV_\pi \subset \cT_X$ the bundle of vertical tangent vectors.
There is a canonical bundle isomorphism
\[\nu\colon \cV_\pi  \to \pi^*(\cT_M)\]
sending a vertical tangent vector $v\in \cT_{X,x}$ to the corresponding tangent vector $\nu(v)\in \cT_{M,\pi(x)}$. 

\begin{definition}\label{based}Let $M$ be a complex manifold. Suppose given a complex \hk structure on an open subset $X^0$ of the total space $X=\cT_M$, for which the restriction of the projection map $\pi\colon X\to M$ is an affine symplectic fibration. Then we call this  affine symplectic fibration  \emph{normalised} if the  basing map $b$ coincides with the natural map $\nu$.
\end{definition}

Note that the examples of Example \ref{eg} are normalised in this sense, since in the co-ordinate system considered there, the map $\nu$ is defined by $\nu(\frac{\partial}{\partial \theta_i})=\frac{\partial}{\partial z_i}$, and it is immediate from the definition that $\pi_*(J(\frac{\partial}{\partial \theta_i}))=\frac{\partial}{\partial z_i}$. Conversely, we have the following:

\begin{prop}
\label{based2}
Let $M$ be a complex manifold, and suppose  given a complex \hk structure on an open subset $X^0$ of the total space $X=\cT_M$, for which the restriction of the projection map $\pi\colon X\to M$ is a normalised affine symplectic fibration. Then, shrinking $X^0$ if necessary, the \hk structure arises via the construction of Section \ref{eg}.
\end{prop}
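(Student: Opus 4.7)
My plan is to introduce local Darboux coordinates on $M$, use them to identify a distinguished vertical/horizontal frame on $X^0$, and then extract the function $W$ as a fibrewise potential after verifying the necessary integrability conditions.

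First I would apply Darboux's theorem to the base symplectic form $\omega$ (which is supplied by the affine symplectic fibration data $\Omega_- = \pi^*\omega$) to obtain local coordinates $(z_1,\ldots,z_n)$ on $M$ with $\omega = \sum_{p,q}\omega_{pq}\,dz^p\wedge dz^q$ and $\omega_{pq}$ a constant skew-symmetric matrix. The induced fibre coordinates $(\theta_1,\ldots,\theta_n)$ on $X^0$ give the vertical vector fields $v_i := \partial/\partial\theta_i$, which span the $+i$-eigenspace of $I$ by Remark \ref{verti} and \eqref{cricket}. Setting $h_i := J(v_i)$ gives the $-i$-eigenspace, and the normalised condition $b = \nu$ forces $\pi_*(h_i) = \partial/\partial z_i$, hence $h_i = \partial/\partial z_i + \sum_q A_{iq}\,\partial/\partial\theta_q$ for some holomorphic functions $A_{iq}$. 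The formulas \eqref{ijk1}--\eqref{ijk2} for $I,J,K$ on the frame $\{v_i,h_i\}$ are immediate from the quaternion relations; the vanishings $g(v_i,v_j) = g(h_i,h_j) = 0$ in \eqref{g} follow from the $\pm i$-eigenspaces of $I$ being $g$-isotropic, and $g(v_i,h_j) = \omega_{ij}$ follows from Remark \ref{name} combined with $b = \nu$, which gives $\Omega_+(v_i,v_j) = \omega(\partial/\partial z_i,\partial/\partial z_j) = 2\omega_{ij}$ and hence $\Omega_J(v_i,v_j) = \omega_{ij} = g(v_i,h_j)$.

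The heart of the argument is to exhibit a local holomorphic $W$ such that $A_{iq} = \sum_p \eta^{pq}\,\partial^2W/\partial\theta_i\partial\theta_p$. Setting $E_{ij} := \sum_k \omega_{kj} A_{ik}$, this amounts to realising $E_{ij}(z,\theta)$ as the fibrewise Hessian $\partial^2W/\partial\theta_i\partial\theta_j$. I would verify the two required integrability conditions by exploiting closure of $\Omega_\pm$ (which is a consequence of the full HK parallelism $\nabla I = \nabla J = \nabla K = 0$). Expanding $\Omega_+ = \sum \omega_{pq} v^p \wedge v^q$ in the dual coframe $v^p = d\theta^p - A_{rp}\,dz^r$, $h^p = dz^p$, and isolating the $d\theta\wedge d\theta\wedge dz$-component of $d\Omega_+ = 0$, I obtain $\partial_{\theta_r}E_{sp} = \partial_{\theta_p}E_{sr}$; together with closure of $\Omega_I$ (yielding $E_{ij} - E_{ji}$ independent of $\theta$) this decomposes $E$ into a symmetric part $E^{\mathrm{sym}}$ with totally symmetric $\partial_\theta$-derivatives plus a $z$-dependent skew residue $S_{ij}(z)$. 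Showing $S \equiv 0$ is the main obstacle, and I would approach it by carefully using the full HK conditions (not merely the closure of the $\Omega$'s) together with the normalised identification $b = \nu$, possibly after a further shrinkage of $X^0$ and adjustment of $W$ by the fibre-quadratic terms noted in Remark \ref{pemb}.

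Granting these conditions, the Poincar\'e lemma applied fibrewise on a polydisc produces a holomorphic $W$ with $\partial_{\theta_i}\partial_{\theta_j}W = E_{ij}$. Finally, the PDE \eqref{pde} is equivalent to the $d\theta\wedge dz\wedge dz$-component of $d\Omega_+ = 0$, which reads $\partial_{z_c}E_{ba} - \partial_{z_b}E_{ca} + \partial_{\theta_a}F_{bc} = 0$ with $F_{bc} := \sum_{l,m}\eta^{lm}E_{bm}E_{cl}$; substituting $E = \partial_\theta^2W$ and using skew-symmetry of $\eta$ rewrites this as $\partial_{\theta_a}$ of the bracketed expression in \eqref{pde}, so integrating once in $\theta_a$ (with the $\theta$-independent integration constant absorbed into the quadratic ambiguity of $W$) yields exactly \eqref{pde}.
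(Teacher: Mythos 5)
Your overall strategy---Darboux coordinates on the base, the frame $v_i=\partial/\partial\theta_i$, $h_i=J(v_i)$, the normalisation forcing $\pi_*(h_i)=\partial/\partial z_i$, isotropy of the $I$-eigenspaces giving the metric, and then an integrability argument producing $W$---is the same as the paper's, and your derivation of $g(v_i,h_j)=\omega_{ij}$ via $\Omega_+$ and Remark \ref{name} is a harmless variant of the paper's direct computation with $\Omega_-$. The substantive problem is at the step you yourself call ``the main obstacle'': you extract from closedness of $\Omega_I$ and $\Omega_\pm$ only the relations $\partial_{\theta_r}E_{sp}=\partial_{\theta_p}E_{sr}$ and the $\theta$-independence of $E_{ij}-E_{ji}$, and then leave the vanishing of the skew residue $S_{ij}(z)$ as a declared intention rather than a proof. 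As written the argument is therefore incomplete at its central point: without $E_{ij}=E_{ji}$ there is no $W$ with $\partial_{\theta_i}\partial_{\theta_j}W=E_{ij}$, a skew term cannot be absorbed into the fibre-quadratic ambiguity of $W$ (Hessians are symmetric), and it is not removed by a symplectic coordinate change on $M$ since it transforms tensorially.

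The paper handles the symmetry by working directly with the Levi-Civita connection rather than with the closed forms: since $\nabla$ preserves the $I$-eigenspaces and is torsion-free, the verticality of $[v_i,h_j]$ forces $\nabla_{v_i}(h_j)=0$, and the verticality of $[h_i,h_j]$ forces both $[h_i,h_j]=0$ and $\nabla_{h_i}(h_j)=\nabla_{h_j}(h_i)$; combining these with $\nabla J=0$ and the constancy of $g(h_i,v_k)$ gives $g(\nabla_{h_j}v_i,h_k)=g(h_i,\nabla_{h_j}v_k)$ and hence full symmetry of $\partial_{\theta_i}c_{jk}=g([v_i,h_j],h_k)$ in $i,j,k$. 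You should reproduce this chain: your two closed-form identities are equivalent in content, but the connection-level argument is what the paper intends and delivers the total symmetry in one stroke. Be aware, though, that this also controls only the $\theta$-derivative of $c_{jk}$, so the skew $\theta$-independent residue you isolate is exactly the point at which the paper's ``it now follows'' is terse; the symmetric residue is absorbed into the quadratic ambiguity of $W$, and a complete write-up should say explicitly how the skew one is excluded. Finally, your last step is slightly garbled: \eqref{pde} is already the once-differentiated equation, so no integration in $\theta_a$ is needed---the paper simply reads \eqref{pde} off from the relations $[h_i,h_j]=0$ established earlier in the proof.
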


\begin{proof}
By the holomorphic Darboux theorem, we can find local co-ordinates $z_i$ on the base $M$ such that the base symplectic form can be written in the form $\sum_{i,j} \omega_{ij} \cdot dz^i\wedge dz^j$ for some constant skew-symmetric matrix $\omega_{ij}$. Take the induced co-ordinate system   $(z_i,\theta_j)$ on $X$ and set
$v_i=\frac{\partial}{\partial \theta_i}$ and $h_i=J(v_i)$.
Then,   as in Remark \ref{verti}, since the $v_i$ are vertical tangent vector fields they satisfy $I(v_i)=i v_i$, and it follows that in the basis of vector fields $(v_i,h_j)$ the operators $I,J,K$ are given by the formulae of Section \ref{eg}. 

The normalisation condition is the statement that $\pi_*(h_i)=\frac{\partial}{\partial z_i}$, and we can therefore write
\[h_j = \frac{\partial}{\partial z_j} + \sum_{p,q} \eta^{pq} \cdot c_{jp}(z,\theta) \cdot \frac{\partial}{\partial \theta_q}\]
for some locally-defined holomorphic functions $c_{jp}(z,\theta)$. Since $I$ preserves the metric $g$, the eigenspaces of $I$ are necessarily isotropic. Thus the metric is determined by
\[2g(v_i,h_j)=-g(h_j,(J-iK)(h_i))= \Omega_-(h_i,h_j)=\omega(\pi_*(h_i),\pi_*(h_j))=2\omega_{ij},\]
and therefore coincides with that of  Section \ref{eg}.

Consider next the expression
\[g\big([v_i,h_j],h_k\big)=\frac{\partial}{\partial \theta_i}  c_{jk}(z,\theta).\]
We claim that  the expression on the right is completely symmetric in $i,j,k$. 
To prove this, note first that $[v_i,h_j]$ is a vertical vector field. Since $\nabla$ preserves the eigenspace decomposition of $I$, the relation
$\nabla_{v_i}(h_j)-\nabla_{h_j}(v_i)=[v_i,h_j]$
implies that $\nabla_{v_i}(h_j)=0$, and so
\[g\big([v_i,h_j],h_k\big)=-g\big(\nabla_{h_j}(v_i),h_k\big).\]
Similarly, since $[h_i,h_j]$ is vertical, both sides of the expression
$\nabla_{h_i}(h_j)-\nabla_{h_j}(h_i)=[h_i,h_j]$
must vanish. Since $J$ is covariantly constant, and  $g(h_j,v_k)$ is constant,  we therefore have
\[g\big(\nabla_{h_j}(v_i),h_k\big)=g\big(\nabla_{h_j}(J (v_i)),J(h_k)\big)=-g\big(\nabla_{h_j} (h_i),v_k\big)=g\big(h_i,\nabla_{h_j}(v_k)\big),\]
which together with $\nabla_{h_i}(h_j)=\nabla_{h_j}(h_i)$ proves the required symmetry property.

It now follows that we can write $h_i$ in the form \eqref{above} for some locally-defined function $W=W(z,\theta)$. The relations $[h_i,h_j]=0$ obtained above then imply that $W$ satisfies the partial differential equations \eqref{pde}, which completes the proof.
\end{proof}

\subsection{Developing maps}
In this section we prove that an arbitrary complex \hk manifold with an affine symplectic fibration is locally isomorphic to one arising from the construction of Section \ref{eg}. For this purpose we introduce the following notion:

\begin{definition}
Let $\pi\colon X\to M$ be an affine symplectic fibration.
A \emph{developing map} defined on an open subset $U\subset X$ is  defined to be an open embedding $f\colon U\to \cT_M$, commuting with the projections to $M$, such that 
\begin{equation}
\label{ploop}\nu(f_*(v))=\pi_*(J(v)),\end{equation}
for any vector field $v$ on $U$ which is vertical for the restriction of the map $\pi$.
\end{definition}

\begin{prop}
Let $\pi\colon X\to M$ be an affine symplectic fibration. Then 
a developing map exists in a neighbourhood of any given point $x\in X$.\end{prop}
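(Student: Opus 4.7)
The plan is to produce local coordinates on $X$ in which the basing data becomes standard, and then read off the developing map from them. First, using the holomorphic Darboux theorem, I would choose coordinates $(z_1,\ldots,z_n)$ on a neighbourhood of $m:=\pi(x)$ in $M$ such that $\omega = \sum_{i,j}\omega_{ij}\,dz^i\wedge dz^j$ with constant skew-symmetric $\omega_{ij}$. Using the basing map, I would then introduce vertical holomorphic vector fields $v_i := b^{-1}(\pi^*\partial_{z_i})$ on a neighbourhood of $x$; these form a local frame for $\cV_\pi$.

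The heart of the argument, and the step I expect to be the main obstacle, is showing $[v_i, v_j]=0$. My approach is to exhibit each $v_i$ as a Hamiltonian vector field for the symplectic form $\Omega_I$ whose Hamiltonian is pulled back from $M$. Since the $v_i$ lie in the $+i$-eigenspace of $I$, which is $g$-isotropic, one has $\Omega_I(v_i,v_j) = ig(v_i,v_j) = 0$; in particular the fibres of $\pi$ are Lagrangian for $\Omega_I$. Introducing $h_j := J(v_j)$, the 1-form $\pi^*(dz^j) = d(z_j\circ\pi)$ is dual to $h_j$ within the frame $\{v_k,h_l\}$ (it annihilates vertical vectors, and satisfies $\pi^*(dz^j)(h_k)=\delta_{jk}$ because $\pi_*(h_k) = b(v_k) = \partial_{z_k}$); a short calculation using Remark \ref{name} together with $\Omega_-(v_i,v_j)=0$ then yields $g(v_i,h_j) = \Omega_J(v_i,v_j) = \omega_{ij}$, and hence $\Omega_I(v_i,h_j) = -i\omega_{ij}$. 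Consequently
\[\iota_{v_i}\Omega_I = -i\sum_j\omega_{ij}\,d(z_j\circ\pi) = dH_i, \qquad H_i := -i\sum_j\omega_{ij}(z_j\circ\pi),\]
so that each $v_i$ is the Hamiltonian vector field of a function pulled back from $M$. As $H_i$ and $H_j$ are both fibre-constant and the fibres are Lagrangian for $\Omega_I$, the Poisson bracket $\{H_i,H_j\}_{\Omega_I} = \Omega_I(v_i,v_j) = 0$ vanishes, and hence $[v_i,v_j] = 0$ by the standard identity $[X_f,X_g]=X_{\{f,g\}}$.

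Once commutativity is in hand, the remainder is routine. I would choose a local holomorphic section $\sigma$ of $\pi$ through $x$, and define $\theta_i$ on a neighbourhood of $x$ by letting $\theta_i(p)$ record the $i$-th time parameter of the joint flow of $v_1,\ldots, v_n$ taking $\sigma(\pi(p))$ to $p$. Then $(z_j\circ\pi,\theta_i)$ is a local coordinate system on $X$ near $x$ in which $v_i = \partial/\partial\theta_i$. Setting
\[f(p) := \sum_i \theta_i(p)\cdot\partial_{z_i}\big|_{\pi(p)}\]
produces a holomorphic open embedding commuting with $\pi$, and since $f_*(v_i) = f_*(\partial/\partial\theta_i)$ equals $\partial_{z_i}$ regarded as a vertical tangent vector to $\cT_M$, one obtains $\nu(f_*(v_i)) = \partial_{z_i} = b(v_i) = \pi_*(J(v_i))$, verifying \eqref{ploop} on the frame $\{v_i\}$ and thus for all vertical vectors by $\O_X$-linearity.
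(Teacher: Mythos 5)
Your argument is correct, and it handles the key integrability step by a genuinely different route from the paper. The paper starts from an arbitrary commuting vertical coordinate frame $\partial/\partial\phi_i$ and must solve $\partial\theta_j/\partial\phi_i=a_{ij}$, where $J(\partial/\partial\phi_i)=\sum_j a_{ij}\,\partial/\partial z_j+(\text{vertical})$; the obstruction is the symmetry of $\partial a_{ij}/\partial\phi_k$ in $i$ and $k$, which is removed using $\nabla I=\nabla J=0$ and torsion-freeness of the Levi-Civita connection. You instead fix at the outset the frame $v_i=b^{-1}(\pi^*\partial_{z_i})$ dictated by the basing map --- precisely the frame that must become $\partial/\partial\theta_i$ --- and prove that it commutes. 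The two obstructions are equivalent (both express the local existence of a fibrewise primitive $\theta_j$), but your proof of commutativity runs through the holomorphic symplectic form $\Omega_I$: the fibres are $\Omega_I$-Lagrangian since the vertical distribution is the $+i$-eigenspace of $I$, the $v_i$ are Hamiltonian vector fields of functions pulled back from $M$, and $[X_{H_i},X_{H_j}]=X_{\{H_i,H_j\}}=0$. This is the local Arnold--Liouville argument, and it exhibits the developing map as complexified action--angle coordinates for the Lagrangian fibration $(X,\Omega_I)\to M$, which is a nice invariant way to see it. Your supporting computations --- $g(v_i,h_j)=\omega_{ij}$ via Remark \ref{name} and the vanishing of $\Omega_-$ on vertical vectors, hence $\iota_{v_i}\Omega_I=-i\sum_j\omega_{ij}\,d(z_j\circ\pi)$ --- check out, and the constancy of $\omega_{ij}$ (Darboux) is exactly what makes this $1$-form exact with fibre-constant primitive. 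The only ingredients you use that the paper does not state verbatim are the closedness of $\Omega_I$ (immediate, since $\Omega_I$ is parallel for the torsion-free connection $\nabla$) and the simultaneous flow-box theorem for a commuting holomorphic frame; both are standard. The trade-off is that the paper's more computational route produces the coordinate description of the developing map that is reused in Proposition \ref{based2}, whereas yours is cleaner conceptually but leans on the symplectic formalism.
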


\begin{proof}
Let $z_i$ be local Darboux co-ordinates on $M$ for the base symplectic structure $\omega$, as in the proof of the previous result. Since $\pi$ is a submersion, we can complete these to local co-ordinates  $(z_i,\phi_i)$ on $X$ so that the vertical tangent spaces are spanned by the vector fields $v_i=\frac{\partial}{\partial \phi_i}$. 
Let us write \[h_i=J(v_i)=\sum_j a_{ij}(z,\phi)\cdot \frac{\partial}{\partial z_j}+\sum_j b_{ij}(z,\phi)\cdot \frac{\partial}{\partial \phi_j}\]
for some locally-defined holomorphic functions $a_{ij}(z,\phi)$ and $b_{ij}(z,\phi)$. 

Consider the induced co-ordinate system $(z_i,\theta_j)$ on the total space $\cT_M$. Then $\nu(\frac{\partial}{\partial \theta_j})=\frac{\partial}{\partial z_j}$, and the defining property of the developing map  $\theta_j=\theta_j(z_i,\phi_i)$ becomes the condition that \[\sum_j \frac{\partial \theta_j}{\partial \phi_i}\cdot \frac{\partial}{\partial z_j} =\sum_j a_{ij}(z,\phi)\cdot \frac{\partial}{\partial z_j},\]
for all $i$. To show the local existence of such a map we must check that $\frac{\partial a_{ij}}{\partial \phi_k}  =\frac{\partial a_{kj}}{\partial \phi_i} $. 
In more intrinsic terms this is the statement that the expression
\[\pi_*([v_k, h_i])=\sum_j \frac{\partial a_{ij}}{\partial \phi_k}\cdot \frac{\partial}{\partial z_j}\]
is symmetric under exchanging $i$ and $k$. Since $\nabla$ preserves the eigenspaces of $I$, and hence the sub-bundle of vertical vector fields, the relation
$[v_k,h_i]=\nabla_{v_k}(h_i)-\nabla_{h_i}(v_k)$
implies that
\[\pi_*([v_k,h_i])=\pi_*(\nabla_{v_k}(h_i)).\] Using the fact that $J$ is covariantly constant we get
\[\nabla_{v_k}(h_i)-\nabla_{v_i}(h_k)= J\big(\nabla_{v_k}(v_i)-\nabla_{v_i}(v_k)\big)=J([v_i,v_k])=0,\]
which gives the required symmetry. It is easy to see from the relation \eqref{ploop} that the derivative of the developing map is an isomorphism, so restricting its domain if necessary we can assume that it is a local embedding. 
\end{proof}

The next result gives the promised  local description of complex \hk manifolds with an affine symplectic fibration.
\begin{prop}
 Let $\pi\colon X\to M$ be a complex \hk manifold equipped with an affine symplectic fibration. Take a point $x\in X$ and let $f\colon U\to \cT_M$ be a developing map  defined on an open neighbourhood $x\in U\subset X$. Set $V=f(U)\subset \cT_M$, and use the resulting isomorphism $f\colon U\to V$ to transfer the complex \hk structure from $U$ to $V$. Then, after possibly shrinking $U$, the \hk structure on $V$ arises from  the construction of Section \ref{eg}. \end{prop}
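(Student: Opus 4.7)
The plan is to reduce this statement immediately to \autoref{based2}. The developing map $f\colon U \to V \subset \cT_M$ commutes with the projection to $M$, so after transferring the complex \hk structure from $U$ to $V$ via $f$, the map $\pi\colon V \to M$ is still an affine symplectic fibration (with the same base symplectic form $\omega$). By \autoref{based2}, it is enough to check that this transferred fibration is \emph{normalised}, i.e.\ that its basing map $b'$ coincides with the canonical map $\nu$ on $V$.

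First I would set up the comparison of the two basing maps. Let $v' \in \cV_{\pi,y}$ be a vertical tangent vector at a point $y = f(x) \in V$, and write $v' = f_*(v)$ for $v \in \cV_{\pi,x} \subset \cT_{U,x}$ (this is possible because $f$ commutes with $\pi$, hence sends vertical vectors to vertical vectors, and $f$ is a local diffeomorphism). If $J$ denotes the hyperkähler complex structure on $U$ and $J'$ the transferred one on $V$, then by construction $J' = f_* \circ J \circ f_*^{-1}$, so
\[
J'(v') = f_*(J(v)).
\]
Applying $\pi_*$ and using $\pi \circ f = \pi$ gives $\pi_*(J'(v')) = \pi_*(J(v))$.

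The key step is then to invoke the defining property \eqref{ploop} of the developing map, which says exactly that $\pi_*(J(v)) = \nu(f_*(v)) = \nu(v')$. Combining these two identities yields $b'(v') = \pi_*(J'(v')) = \nu(v')$, so the basing map of the transferred affine symplectic fibration agrees with $\nu$. Thus the fibration on $V$ is normalised.

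Finally, \autoref{based2} applies: after possibly shrinking $U$ (equivalently $V$), the \hk structure on $V$ arises from the construction of Section \ref{eg} for some function $W$ satisfying \eqref{pde}. The only minor subtlety is the initial choice of Darboux co-ordinates on $M$ used inside \autoref{based2}; this is a local condition and can be arranged by shrinking $M$ (and hence further shrinking $U$) if necessary. There is no substantial obstacle here — all the real work has been done in the existence of the developing map and in \autoref{based2}; the present statement is essentially their composition.
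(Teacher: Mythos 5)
Your proposal is correct and follows exactly the paper's argument: the paper also observes that the transferred structure makes $\pi$ an affine symplectic fibration, that the defining property of the developing map makes it normalised, and then cites Proposition \ref{based2}. You have simply spelled out the normalisation check in more detail than the paper does.
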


\begin{proof}
 Since the developing map commutes with the projections to $M$ it is clear that the restriction of the projection $\pi\colon \cT_M\to M$ is an affine symplectic fibration for the transferred \hk structure on $V$. By definition of the developing map it is moreover normalised. The result therefore follows from Proposition \ref{based2}.
\end{proof}


\section{Joyce structures}
\label{three}

In this section we  give a more geometric definition of the notion of a Joyce structure from \cite{RHDT2}, at least in the case when the form $\eta$ appearing there is non-degenerate. 
A Joyce structure is a rather complicated combination of geometric structures which one expects to find on the space of stability conditions on a CY$_3$ triangulated category. The simplest part of this structure, explained in Section \ref{prejoyce}, is obtained from  the basic properties of spaces of stability conditions, in particular the existence of a local isomorphism to a vector space. The rest of the data  of a Joyce  structure, discussed in Section \ref{joyce}, is  induced  by the  Donaldson-Thomas invariants of the category in a rather indirect and involved way which is explained in \cite{RHDT2} and briefly summarised in the Appendix.

\subsection{Spaces with period maps}
\label{prejoyce}

Let $M$ be a complex manifold and $\cH$ a holomorphic vector bundle on $M$. By a \emph{bundle of lattices} in $\cH$ we mean a  holomorphically-varying collection  of subgroups $\cL_p\subset \cH_p$ in the fibres    of $\cH$ such that the induced maps $\cL_p\tensor_{\bZ} \bC\to \cH_{p}$ are all isomorphisms. Any such bundle of lattices determines a locally-constant subsheaf $\cL\subset \cT_M$ consisting of the holomorphic sections of $\cT_{M}$ whose values at each point $p\in M$ lie in the subgroup $\cL_p$. This local system in turn determines  a unique flat connection  on the bundle $\cH$, whose flat sections are precisely the $\bC$-linear combinations of the sections of the subsheaf $\cL$. 

By a \emph{period map} on a complex manifold $M$ we mean a  local isomorphism
\begin{equation}\label{per}\varpi\colon M\to \Hom_{\bZ}(\Gamma,\bC),\end{equation}
where  $\Gamma\isom \bZ^{\oplus n}$ is a free abelian group of finite rank.  Thus $\varpi$ is a holomorphic map from $M$ to the vector space $\Hom_{\bZ}(\Gamma,\bC)$ whose derivative
\begin{equation}\label{potty}(D\varpi)_p\colon \cT_{M,p}\to \Hom_{\bZ}(\Gamma,\bC),\end{equation} at each point $p\in M$ is an isomorphism.   The inverse images of the subgroup $\Gamma^*=\Hom_{\bZ}(\Gamma,\bZ)$  under the maps \eqref{potty} define a bundle of lattices in $\cT_M$, and  we therefore obtain a flat   connection $\nabla$. The period map $\varpi$ now gives  a distinguished vector field $Z\in \Gamma(M,\cT_M)$ via the assignment \[p\mapsto Z_p=(D\varpi)_p^{-1}(\varpi(p))\in \cT_{M,p}.\]

Let us express all this in co-ordinates. Take a basis $(\gamma_1,\cdots,\gamma_n)$ for the lattice $\Gamma$. We obtain  holomorphic functions  \[z_i\colon M\to \bC, \qquad z_i(p)=\varpi(p)(\gamma_i),\] and the assumption that $\varpi$ is a local isomorphism ensures that $(z_1,\cdots,z_n)$  form a system of co-ordinates on $M$.  The dual of the derivative of $\varpi$  at a point $p\in M$ sends $\gamma_i\in \Gamma$ to the element $dz^i\in \cT_{M,p}^*$. This implies that the connection $\nabla$ is torsion-free and that the $z_i$ are flat co-ordinates for this connection. 
The distinguished vector field is  $Z=\sum_i z_i\cdot \frac{\partial}{\partial z_i}$. Note that it follows that  $\nabla(Z)=\id$.

In general we would like to consider complex manifolds $M$ with a locally-defined period map $\varpi$. Such spaces arise for example as discrete quotients of manifolds with a globally-defined period map.  We therefore make the following

\begin{defn}
\label{pp}
A \emph{period structure} on a complex manifold $M$  consists of data
\begin{itemize}
\item[(P1)] a bundle of lattices $\cL\subset \cT_M$ whose associated flat connection we denote by $\nabla$;

\item[(P2)]  a distinguished vector field $Z\in \Gamma(M,\cT_M)$ satisfying $\nabla(Z)=\id$.
\end{itemize}
\end{defn}

Take a point $p\in M$ and set $\Gamma^*=\cL_p$.  A basis of the free abelian group $\Gamma^*$ extends uniquely to a basis of $\nabla$-flat sections $\phi_1,\cdots,\phi_n$ of the tangent bundle $\cT_M$ over a contractible open neighbourhood $p\in M^0\subset M$. Writing the vector field $Z$ in the form $Z=\sum_i z_i\cdot \phi_i$ then defines holomorphic functions $z_i\colon M^0\to \bC$, and condition (P2) then implies that $\phi_i=\frac{\partial}{\partial z_i}$.  It follows that on the open neighbourhood $p\in M^0\subset M$ the structure arises from a period map as explained above. Note in particular that the connection $\nabla$ is necessarily torsion-free.
\smallskip

In the situations of interest below we consider period maps \eqref{per} in which the lattice $\Gamma$ has a natural skew-symmetric integral form $\eta\colon \Gamma\times\Gamma\to \bZ$. The local version of this is

\begin{defn}
\label{pps}
A \emph{period structure with skew-form} consists of a period structure as above, together with a $\nabla$-flat skew-symmetric form
\[\eta\colon \cT_M^*\times\cT_M^*\to \O_M,\]
which takes  integral values on the lattices $\cL^*\subset \cT_M^*$.
\end{defn}

We will be particularly interested in the case when the form $\eta$ is non-degenerate. The inverse then defines a complex symplectic form
 \[\omega\colon \cT_M\times\cT_M\to \O_M,\]
 taking rational values on the lattices $\cL\subset \cT_M$.

\subsection{Joyce structures}
\label{joyce}

We can now give a geometric definition which is a slight strengthening of that  of a Joyce structure given in \cite{RHDT2}. We discuss the differences in Remark \ref{diff} below.

\begin{defn}
Let $M$ be a complex manifold, and let $X$ denote the total space of the holomorphic tangent bundle. A \emph{strong Joyce structure} on $M$ consists of
\begin{itemize}
\item[(a)] a period structure with skew-form $(\cL,Z,\eta)$ on $M$;
\item[(b)] a  complex \hk structure $(g,I,J,K)$ on $X$, \end{itemize}
satisfying the following conditions
\begin{itemize}
\item[(J1)] the canonical projection $\pi\colon X\to M$ is a normalised  affine symplectic fibration;

\item[(J2)] the base symplectic structure $\omega\in \Gamma(M,\bigwedge^2 \cT_M)$ is the inverse to the form $\eta$;

\item[(J3)] the involution $\iota\colon X\to X$ acting by $-1$  on the fibres of $\pi$ satisfies
\vspace{-.15em}
\[\iota^*(g)=-g, \qquad \iota^*(I)=I, \qquad \iota^*(J\pm iK)=-(J\pm iK);\]
\item[(J4)] the $\nabla$-horizontal lift\todo{Explain?} $E\in \Gamma(X,\cT_X)$ of the vector field $Z$ satisfies
\vspace{-.15em}
\[\Lie_E(g)=g, \qquad \Lie_E(I)=0, \qquad \Lie_E(J\pm iK)=\mp(J\pm iK);\]
\item[(J5)] the \hk structure is invariant under translations by the lattice $(2\pi i)\cdot \cL\subset \cT_M$.
\end{itemize}
\end{defn}

As in Section \ref{prejoyce}, we can take $\nabla$-flat local co-ordinates $z_i$ on $M$ such that the lattice $\cL\subset \cT_{M,p}$ is spanned by the vector fields $\frac{\partial}{\partial z_i}$. Let $(z_i,\theta_j)$ be the induced co-ordinate system on $X$.  
 Condition (J2) together with Definition \ref{pps} ensure that the  symplectic form $\omega$ is covariantly constant and can therefore be written in the form $\omega=\sum_{i,j} \omega_{ij} \cdot dz_i\wedge dz_j$, with $\omega_{ij}$ a  non-degenerate, skew-symmetric matrix.  Moreover,  the inverse matrix  $\eta^{ij}$ is integral.  Condition (J1) and  Proposition \ref{based2} then imply that the complex \hk structure $(g,I,J,K)$ arises from the formulae of Example \ref{eg} for some locally-defined holomorphic function $W\colon X\to \bC$  satisfying the equations
 \begin{equation}
 \label{hoj}\frac{\partial}{\partial \theta_k} \bigg(\frac{\partial^2 W}{\partial \theta_i \partial z_j }-\frac{\partial^2 W}{\partial \theta_j  \partial z_i }-\sum_{p,q} \eta^{pq} \cdot \frac{\partial^2 W}{\partial \theta_i \partial \theta_p} \cdot \frac{\partial^2 W}{\partial \theta_j \partial \theta_q}\bigg)=0.\end{equation}
 
 As in Remark \ref{pemb}, the function $W=W(z,\theta)$ is only well-defined up to the addition of functions which are at most quadratic in the $\theta_i$ variables. 
 Condition (J3) is  equivalent to the statement that it may be taken to be an odd function of the $\theta_i$ co-ordinates.  Definition \ref{pp} shows that $Z= \sum_i z_i \cdot \frac{\partial}{\partial z_i}$, and the $\nabla$-horizontal lift  $E$ is given by the same formula. Condition (J4) then becomes the statement that $W$ can be taken to be homogeneous of degree $-1$ under simultaneous rescaling of the variables $z_i$. Finally, condition (J5) is equivalent to the statement  that the second derivatives of $W$ with respect to the fibre variables $\theta_j$ are  invariant under transformations of the form $\theta_j\mapsto \theta_j+2\pi i$.
 
We can in fact assume that $W$ satisfies the simplified equation appearing in \cite{RHDT2}:
\begin{equation}
\label{simples}\frac{\partial^2 W}{\partial \theta_i \partial z_j }-\frac{\partial^2 W}{\partial \theta_j  \partial z_i }=\sum_{p,q} \eta^{pq} \cdot \frac{\partial^2 W}{\partial \theta_i \partial \theta_p} \cdot \frac{\partial^2 W}{\partial \theta_j \partial \theta_q}.\end{equation}
Indeed, if we  replace $W$ by the expression
\[W(z_i,\theta_j)-\sum_k \theta_k\cdot \frac{\partial W}{\partial \theta_k}(z_i,0),\]
then each of the first two terms on the left-hand side of  \eqref{simples} vanishes along the locus where all $\theta_i=0$. But the third term also vanishes along this locus, because  $W$ is odd in the  co-ordinates $\theta_j$.  Since the equations \eqref{hoj} state that  the left-hand side of  \eqref{simples} is independent of the  co-ordinates $\theta_j$, after the above modification it must  vanish identically.

\begin{remark}
 \label{diff}
  There are three differences between the above definition of a strong Joyce structure and the definition of a Joyce structure in \cite{RHDT2}.
 \begin{itemize}
 \item[(i)] The above formulation assumes that the form $\eta$ is non-degenerate. This is not required in the definition of \cite{RHDT2}. To include degenerate forms in the framework of this paper we would have to consider generalisations of the notion of a \hk structure which have the endomorphisms $I,J,K$, but in which the metric is dropped and replaced with a symmetric, bilinear form on the cotangent bundle.\smallskip
 \item[(ii)]  In \cite{RHDT2} it was only assumed that the third, rather than the second, derivatives of the function $W$ in the fibre directions were periodic. This was to allow the inclusion of certain examples obtained by doubling spaces of stability conditions for which the form $\eta$ is degenerate, and even vanishing. 
 \smallskip
 \item[(iii)] The definition in \cite{RHDT2} allows the function $W$ to be meromorphic. Of course, we could also introduce meromorphic complex \hk structures to deal with this generalisation.
 \end{itemize}
 \end{remark}

Let us briefly consider the linearisation procedure of \cite[Section 7]{RHDT2}. Identifying $M$ with the zero-section $M\subset X$, the tangent bundle $\cT_M$ becomes a sub-bundle of the restriction $\cT_X|_M$.  The fact that $W$ is an odd function of the $\theta_i$ co-ordinates ensures that along the zero section $M\subset X$ we have $h_i=\frac{\partial}{\partial z_i}$. It follows that the Levi-Civita connection preserves the tangents to the zero section and hence induces a torsion-free connection $\nabla^J$ on $\cT_M$. Moreover, Remark \ref{curv} together with the oddness of the function $W$ ensure that this  connection is flat.
From equation \eqref{blah} we see that it is given explicitly by
\begin{equation}
\label{linearjoyce}\nabla_{\frac{\partial}{\partial z_i}} \Big(\frac{\partial}{\partial z_j}\Big)=-\sum_{p,q} \eta^{pq}\cdot \frac{\partial^3 W}{\partial \theta_i \partial \theta_j \partial \theta_p}\Big|_{\theta=0}\cdot \frac{\partial}{\partial z_q}.\end{equation}
This is what was referred to as the linearised Joyce connection in \cite{RHDT2}.

\subsection{Twistor space}
\label{twistor}

We conclude by making some brief remarks on the twistor space of a complex \hk manifold $X$. The existence of such a twistor space is not obvious - the quotienting out by a distribution could potentially result in an object that is not Hausdorff. However, the fact that the space of complex null geodesics is Hausdorff was proved in \cite{LeBrun}. Consider the quadric in the complex projective plane
\[Q=\big\{[a:b:c]\in \bP^2: a^2+b^2+c^2=0\big\}.\]
For each point $q=[a:b:c]\in Q$ the kernel of the operator $aI+bJ+cK$ defines a half-dimensional sub-bundle $\cH(q)\subset \cT_X$. The fact that $I,J,K$ are flat for the Levi-Civita connection $\nabla$ ensures that this sub-bundle  is involutive, since if $v,w$ are sections of $\cH(q)$ then so is $[v,w]=\nabla_v(w)-\nabla_w(v)$. Thus there is a  corresponding foliation $\cF(q)$ of the space $X$ by complex submanifolds. 
We define the twistor space $Z$ to  the set of pairs $(q, \cL)$, where $q\in Q$, and $\cL$ is a leaf of the foliation $\cF(q)$. There is an obvious projection $\pi\colon Z\to Q\isom \bP^1$. Each point $x\in X$ then defines a section of this projection by sending a point $q\in Q$ to the unique leaf of the foliation $\cF_q$ containing the point $x\in X$.\todo{I find it strange that this construction doesn't seem to appear anywhere in the literature, at least beyond the 4-dimensional case. Also, I couldn't find a discussion about what assumptions would be needed to ensure that $Z$ is a complex manifold. That seems to be quite a subtle question, and not at all automatic.}

Note that if $\pi\colon X\to M$ is an affine symplectic fibration, the sub-bundle $\cH(q)\subset \cT_X$ corresponding to the point $q=[0:1:-i]\in Q$ is the vertical sub-bundle defined by the map $\pi$, and so the corresponding fibre of the twistor space $Z_q=\pi^{-1}(q)$ is naturally identified with $M$.


If we  identify $\bP^1$ with the quadric $Q$  via the isomorphism
\[[s:t]\mapsto \big[2st, (s^2-t^2), i(s^2+t^2)\big],\]
then the sub-bundle $\cH(q)$ becomes the kernel of the operator
\[2st I + s^2 (J+iK)- t^2(J-iK).\]
Given a local basis of vector fields $v_i$ and $h_j$ on $X$ in which the operators $I,J,K$ take the simple form \eqref{ijk1} - \eqref{ijk2}, this sub-bundle  is spanned by the vector fields $sv_i + t h_i$.

\begin{remark}
In the context of Section \ref{eg} the sub-bundle $\cH(q)$ is spanned by
\begin{equation}
\label{flows} \epsilon^{-1}\cdot \frac{\partial}{\partial \theta_i} + \frac{\partial}{\partial z_i} + \sum_{p,q} \eta^{pq} \cdot \frac{\partial^2 W}{\partial \theta_i \partial \theta_p} \cdot \frac{\partial}{\partial \theta_q},\end{equation}
where we set $\epsilon=t/s$. 
In \cite{RHDT2} these sub-bundles $\cH(\epsilon)$  played a central role, and were viewed as defining a pencil of Ehresmann connections on the map $\pi \colon X\to M$.

It seems interesting to try to relate the Riemann-Hilbert problems considered in \cite{RHDT2} to the geometry of the twistor space $Z$. 
Note in particular, that the solutions to the Riemann-Hilbert problem at a particular value $\epsilon\in \bC^*$ are annhilated by the flows \eqref{flows}, and are therefore constant on the leaves of the corresponding foliation $\cF(q)$. Thus such solutions naturally define functions on the fibre $Z_q=\pi^{-1}(q)$ of the twistor space. In the context of categories defined by quivers with potential this suggests a relation between these fibres  and the associated cluster Poisson variety.
\end{remark}

\section{Quantum DT invariants and deformations of anti-self-duality}

In \cite{S92} an integrable deformation of Pleba\~nski's first heavenly equation was constructed using ideas from the deformation quantization programme. The nonlinear terms in both the first and second Pleba\~nski equations comes from the Poisson bracket
\[
\{f,g\}=\sum_{i,j} \eta^{ij} \frac{\partial f}{\partial\theta_i} \frac{\partial g}{\partial\theta_j}
\]
and deformations of this bracket - preserving the Jacobi identity - date back to the work of Moyal who introduced such a structure in his phase space approach to quantum mechanics. The Moyal bracket can be defined in terms of the operator 
\[
P= \exp\left[ 
\frac{i\hbar}{2} \eta^{ij} \frac{{ \leftarrow \atop {\displaystyle\partial}~}}{\partial\theta_i}  \frac{{ \rightarrow \atop {\displaystyle\partial}~}}{\partial\theta_j} 
\right]
\]
(where the arrows show which direction the derivatives are to be taken) and with this one defines the non-commutative but associative product $f*g=fPg\,$ and the corresponding Moyal bracket
\[
\{f,g\}_M = \frac{f*g-g*f}{i\hbar}\,.
\]
Since 
\[
\lim_{\hbar\rightarrow 0} \{f,g\}_M=\{f,g\}
\]
this is a deformation of the normal Poisson bracket with $\hbar$ being the deformation parameter. The Jacobi identity for the Moyal bracket follows trivially from the associative property of the underlying $*$-product.

One can also introduce differential operators ${\hat X}_f$ which contain higher derivatives (formally, as these are to all orders in $\hbar$) that are deformations of Hamiltonian vector fields, and have the property
\[
[{\hat X}_f,{\hat X}_g] = {\hat X}_{\{f,g\}_M}
\]
where the left-hand side is commutator of operators. These operator no longer have a clear geometric interpretation but one can use them instead of Hamiltonian vector fields in Lax equations. Applying this idea results in the Moyal deformation of the first Pleba\~nski equation 
\[
\left\{ \frac{\partial W}{\partial z_i} ,  \frac{\partial W}{\partial z_j} \right \}_M=1
\]
and the Moyal deformed version of the Pleba\~nski's second heavenly equation
\begin{equation}
\frac{\partial^2W}{\partial z_i \partial \theta_j} - \frac{\partial^2W}{\partial z_j \partial \theta_i}= \left\{ \frac{\partial W}{\partial\theta_i}, \frac{\partial W}{\partial\theta_j} \right\}_M\,.
\label{deformedPleb}
\end{equation}
The integrability of such equations remains, though in a formal setting. In \cite{S92} solutions were constructed as a formal power series in $\hbar$ and in \cite{Tak94} the equations were derived from a Riemann-Hilbert splitting problem in the associated Moyal loop group.

The $*$-product, and hence the Moyal bracket, has a very rich structure. In terms of a torus basis,
\[
e^{\theta(\alpha)} * e^{\theta(\beta)} = (-q^{\frac{1}{2}})^{\<\alpha,\beta\>} e^{\theta(\alpha+\beta)}\,,
\]
where $\<\alpha,\beta\>=\eta^{ij}\alpha_i\beta_j\,,$ and hence
\begin{eqnarray}
\{e^{\theta(\alpha)},e^{\theta(\beta)}\}_M 
&=&\frac{1}{i\hbar} \left[ (-q^{\frac{1}{2}})^{\<\alpha,\beta\>} - (-q^{-\frac{1}{2}})^{\<\alpha,\beta\>} \right]e^{\theta(\alpha+\beta)}\,,\\
&=&\frac{2}{\hbar} \sin \left[ \frac{\hbar \<\alpha,\beta\>}{2}\right]e^{\theta(\alpha+\beta)}\,,
\end{eqnarray}
where $q= e^{i(\hbar+2\pi)}$ (the $e^{2\pi i}$ term is to ensure the correct sign for the square root). This algebra has a number of names: the sine-algebra \cite{FZ89} and the quantum torus algebra \cite{KS}. Since
\[
\lim_{\hbar\rightarrow 0} \{e^{\theta(\alpha)},e^{\theta(\beta)}\}_M = \<\alpha,\beta\> e^{\theta(\alpha+\beta)}\,.
\]
in the classical limit the quantum torus algebra becomes the twisted torus algebra.

With the same ansatz 
\[
W=\sum_\alpha F_\alpha(z) \frac{e^{\theta(\alpha)}}{z(\alpha)}
\]
with $F_\alpha$ of degree zero, one obtains the isomonodromy equation
\[
d F_\gamma = \sum_{\alpha+\beta=\gamma}  \frac{1}{i\hbar} \left\{ \mathbb{L}^{\frac{1}{2} \<\alpha,\beta\>} - \mathbb{L}^{-\frac{1}{2} \<\alpha,\beta\>}\right\}
F_\alpha F_\beta d\log z(\beta)\,.
\]
where $\mathbb{L} = e^{i\hbar}\,.$ The construction of solutions to such equations is still at a very early stage, and even in the simplest case, it involves highly intricate calculations \cite{BBS19}. But it appears that the quantum DT invariants play the same role in the construction of solutions to the deformed equation (\ref{deformedPleb}) as DT invariants play in the construction of solutions to equation (\ref{point_intro}).

With the replacement of Hamilton vector fields with infinite order differential operators the link with geometry becomes less clear. At the level of Lax pairs, the replacement of Hamiltonian vector fields with operators seems minor, but these are infinite-order operators, and so the resulting differential equations are of infinite order. Also, any notion of \hk geometry is lost - though it is intriguing to speculate where there is a notion of $q$-deformed \hk structures and $q$-deformed twistor spaces (see, for example, \cite{KKO01,S97}).

\appendix
\section{Summary of \cite{RHDT2}}

For the convenience of the reader we give a brief summary of  the main ideas of \cite{RHDT2}. The key point is that the Donaldson-Thomas (DT) invariants of a CY$_3$ triangulated category can be interpreted as Stokes data for a family of connections over the space of stability conditions. These connections take values in an infinite-dimensional  group $G$ of automorphisms of the space $(\bC^*)^n$ equipped with a constant Poisson structure.  
\subsection{Stokes data}
\label{stokes}
We will start by considering Stokes data in the context of  the finite-dimensional group $\GL_n(\bC)$. Our treatment is based on that in \cite{VTL}, to which we refer the reader for references to the original literature.
 Set $G=\GL_n(\bC)$ and $\fg=\mathfrak{gl}_n(\bC)$, and let $\fh\subset \fg$ denote the Cartan subalgebra of diagonal matrices. Introduce the standard root decomposition
 \[\fg=\fh\oplus \fgod, \qquad \fgod=\bigoplus_{\alpha\in \Phi} \fg_\alpha,\qquad \Phi=\{e_i^*-e_j^*\}\subset \fh^*.\]
 Let us consider a meromorphic connection on the trivial $G$-bundle over $\bP^1$ of the form
\begin{equation}
\label{equation}\nabla=d-\bigg(\frac{U}{\epsilon^2}+\frac{V}{\epsilon} \bigg),\end{equation}
where  $U,V\in\fg $ are constant matrices such that

\begin{itemize}
\item[(i)] $U=\operatorname{diag}(u_1,\cdots, u_n)\in \hreg$ is diagonal with distinct eigenvalues,
\smallskip

\item[(ii)] $V\in \fgod$ has zeroes on the diagonal.
\end{itemize}
\smallskip

 The connection $\nabla$ has a regular singularity at $\epsilon=\infty$, but the singularity at $\epsilon=0$ is irregular, and is essentially the simplest example of such a singularity.   It is a classical fact that the gauge equivalence of a connection in a neighbourhood of an irregular singularity   is not determined solely by the monodromy: we must also consider its Stokes data.

For any ray $r=\bR_{>0}\cdot z\subset \bC^*$ we denote by $\bH_r\subset \bC^*$ the half-plane centered on it. 
The \emph{Stokes rays} of the connection \eqref{equation} at $\epsilon=0$  are defined to be the rays  \[\bR_{>0}\cdot 
(u_i-u_j)=\bR_{>0} \cdot U(\alpha)\subset \bC^*, \qquad \alpha=e_i^*-e_j^*.\]
We then have the following fundamental existence result \cite{BJL}:

\begin{theorem}[Balser, Jurkat, Lutz]
\label{bjl}
 For any non-Stokes ray $r\subset \bC^*$  there is a unique 
flat section  $Y_r\colon \bH_r\to G$ of the connection \eqref{equation}
such that \[Y_r(\epsilon)\cdot \exp(U/\epsilon)\to 1 \text{ as }\epsilon\to 0.\]
\end{theorem}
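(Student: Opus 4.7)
The natural strategy is to look for the canonical sections in the gauge-transformed form $Y_r(\epsilon) = F_r(\epsilon)\cdot \exp(-U/\epsilon)$, so that the prescribed asymptotic $Y_r\cdot \exp(U/\epsilon)\to 1$ becomes simply $F_r(\epsilon)\to 1$ as $\epsilon\to 0$ in $\bH_r$. A short computation shows the flatness condition for $\nabla$ becomes
\[
\epsilon^2 F'(\epsilon) = [U,F(\epsilon)] + \epsilon \, V\, F(\epsilon).
\]
In the basis diagonalising $U$, the off-diagonal entries satisfy $\epsilon^2 F_{ij}' - (u_i-u_j)F_{ij} = \epsilon\sum_k V_{ik}F_{kj}$, while the diagonal entries $F_{ii}$ evolve by a purely first-order equation driven by the off-diagonal entries.

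The first step is to produce a unique formal solution $\widehat F(\epsilon) = 1 + \sum_{k\geq 1} F_k\epsilon^k$. Substituting and comparing powers of $\epsilon$ gives the recursion $[U,F_{k+1}] = k\, F_k - V F_k$. Since $U\in \fh^{\mathrm{reg}}$, the operator $\mathrm{ad}_U$ is a bijection on $\fgod$, and using (ii) together with an inductive splitting of $F_k$ into its diagonal and off-diagonal parts, the off-diagonal components are uniquely determined by inversion of $\mathrm{ad}_U$, and the diagonal components are uniquely determined by requiring the next off-diagonal term to exist, i.e.\ by the vanishing of the diagonal of $(k-V)F_k$. This produces $\widehat F$ uniquely; generically it is a divergent series.

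The central step is to Borel-resum $\widehat F$ on the half-plane $\bH_r$, or equivalently to realise it as the unique solution of a contraction-type integral equation. Rewrite the off-diagonal ODEs as
\[
F_{ij}(\epsilon) = \int_{\gamma_{ij}} \exp\!\Big(\frac{u_i-u_j}{\epsilon} - \frac{u_i-u_j}{t}\Big)\cdot \frac{1}{t}\sum_k V_{ik}F_{kj}(t)\,dt,
\]
coupled with similar integrals for $F_{ii}$. The contour $\gamma_{ij}$ is taken from $0$ to $\epsilon\in \bH_r$ along a path on which $\Re\!\big((u_i-u_j)/t\big)$ is monotone, so the exponential kernel is bounded by $1$. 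The non-Stokes hypothesis on $r$ is exactly the statement that for every root $\alpha=e_i^\ast-e_j^\ast$ the direction $U(\alpha)=u_i-u_j$ is not perpendicular to $r$, which guarantees such a contour exists uniformly for all $\epsilon\in \bH_r$ and gives the required sign of $\Re((u_i-u_j)/\epsilon)$. A standard fixed-point argument on the Banach space of bounded holomorphic maps $\bH_r\cap \{|\epsilon|<\delta\}\to \fg$ then produces a unique continuous $F_r$ with $F_r(\epsilon)\to 1$; holomorphy on all of $\bH_r$ follows by analytic continuation along the flat connection, and the asymptotic expansion of $F_r$ coincides with $\widehat F$ by construction. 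This is the technical heart of the proof and the step I would expect to demand the most care: the contour $\gamma_{ij}$ must be chosen compatibly for all roots $\alpha$ simultaneously, and the exponential weights must be controlled uniformly as $\epsilon$ varies in $\bH_r$.

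For uniqueness, suppose $Y_r$ and $Y_r'$ are two flat sections with the stated asymptotic. Their ratio $F_r^{-1}F_r'$ is flat and tends to $1$, hence is annihilated by the operator $\epsilon^2 d/d\epsilon - (\mathrm{ad}\, U + \epsilon V)$ while being bounded. The off-diagonal entries of the difference decay faster than any power of $\epsilon$ (by iterating the integral equation), and the diagonal entries are then forced to be constant by their ODE, hence equal to $1$. This gives $Y_r = Y_r'$ and completes the proof.
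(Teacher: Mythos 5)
First, a point of order: the paper does not prove this statement --- it is the classical theorem of Balser, Jurkat and Lutz, quoted from \cite{BJL} with the surrounding treatment following \cite{VTL}. So there is no in-paper proof to compare against; I can only assess your sketch on its own terms. Its overall architecture is the standard and correct one: gauge away the irregular part by $Y_r=F_r\cdot\exp(-U/\epsilon)$, construct the unique formal solution $\widehat F$ using the invertibility of $\operatorname{ad}_U$ on $\fgod$ (your recursion and the solvability condition determining the diagonal parts are right), realise $F_r$ as the fixed point of a Volterra-type integral equation, and prove uniqueness separately.

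There are, however, two genuine gaps. First, you misidentify where the hypothesis enters: the non-Stokes condition is that $r\neq \bR_{>0}\cdot U(\alpha)$ for every root $\alpha$, i.e.\ that no $U(\alpha)$ is \emph{parallel} to $r$, not that none is perpendicular to it. Its role is that the anti-Stokes lines $\{\Re(U(\alpha)/\epsilon)=0\}=i\,U(\alpha)\cdot\bR$ then never coincide with $\partial\bH_r$; equivalently, for each root $\Re(U(\alpha)/\epsilon)$ changes sign strictly inside $\bH_r$. This immediately breaks your contour claim: since $\bH_r$ has opening exactly $\pi$, for any fixed $\epsilon\in\bH_r$ roughly half the roots satisfy $\Re(U(\alpha)/\epsilon)>0$, and for those entries no contour from $0$ to $\epsilon$ keeps the kernel $\exp\bigl(U(\alpha)(1/t-1/\epsilon)\bigr)$ bounded near $t=0$ unless it approaches $0$ from the opposite half-plane; the standard fix (as in \cite{BJL}) is to anchor the contour for those entries at the outer boundary of the domain rather than at the origin, choosing the endpoint entry-by-entry according to the sign of $\Re(U(\alpha)/\epsilon)$, and then to patch sectors. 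Second, the uniqueness argument is incomplete: rapid (even beyond-all-orders) decay of the off-diagonal entries of the difference does not force them to vanish, so the diagonal ODE is not ``forced constant''. The correct argument writes $\tilde Y_r=Y_r\cdot C$ for a constant $C\in G$, so that $\tilde F_r=F_r\cdot e^{-U/\epsilon}Ce^{U/\epsilon}$ has $(i,j)$ entry involving $C_{ij}e^{(u_j-u_i)/\epsilon}$; because $\bH_r$ is a full half-plane it meets, for every root $\alpha$, the open half-plane where $\Re(U(\alpha)/\epsilon)<0$, along which this exponential blows up, forcing $C_{ij}=0$ for $i\neq j$ and $C_{ii}=1$. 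This is precisely the step where both the width $\pi$ of $\bH_r$ and the non-Stokes hypothesis are indispensable (on a narrower sector uniqueness genuinely fails), and your sketch does not capture it.
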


Using this result we can associate to each Stokes ray $\ell$ a \emph{Stokes factor}
\[\bS_\ell=Y_{r_+}(\epsilon)\cdot Y_{r_-}(\epsilon)^{-1}\in \exp\big(\bigoplus_{U(\alpha)\in \ell} \fg_\alpha\big) \subset G,\]
where $r_+$ and $r_-$  are small clockwise and anti-clockwise perturbations of the ray $\ell$.

Let us now consider varying the diagonal matrix $U\in\fh^{\rm reg}$. It turns out that  we can  uniquely deform the matrix $V\in\fgod$ so that the Stokes factors  $\bS(\ell)$ remain constant. Such deformations are called \emph{isomonodromic} or \emph{iso-Stokes}, and the variation of $V=V(U)$ is described by the partial differential equation
\begin{equation}
\label{isomon}d\log V_\gamma =\sum_{\alpha+\beta=\gamma} [V_\alpha,V_\beta] \cdot d\log U(\beta), \qquad V=\sum_{\gamma\in \Phi} V_\gamma\in\fgod.\end{equation}

In fact the isomonodromy condition stated above is not sufficiently precise. As $U\in \fh^{reg}$   varies, the Stokes rays $\bR_{>0} \cdot U(\alpha)$ may cross, at which point the statement that the Stokes factors are constant ceases to make sense. The correct condition to impose is that for  
any convex sector $\Delta\subset \bC^*$, the clockwise product
\[\bS_p(\Delta)=\prod_{\ell\in \Delta} \bS_p(\ell)\in G,\]
of Stokes factor associated to rays in the sector should be constant as $U\in \hreg$ varies, providing  that no Stokes ray crosses the boundary of $\Delta$.

Let us suppose given the matrix $U\in \hreg$ and the Stokes factors $\bS(\ell)\in G$, and consider the inverse problem of reconstructing the connection \eqref{equation}. To  do this we can first try to construct the canonical half-plane solutions $Y_r(t)$ and then differentiate to obtain the operator $V$. This leads to the following 
Riemann-Hilbert boundary-value problem:
\medskip

\noindent {\bf Riemann-Hilbert problem.} 
For each non-Stokes ray $r\subset \bC^*$ find a holomorphic function
$Y_r\colon \bH_r\to G$
such that the following three properties hold:
\begin{itemize}
\item[(RH1)] $Y_r(\epsilon) \cdot \exp(U/\epsilon) \to 1$ as $t \to 0$ in $\bH_r$,\smallskip

\item[(RH2)] there exists $k>0$ such that $|\epsilon|^{-k} < \|Y_r(\epsilon)\|<|\epsilon|^k$ as $\epsilon\to \infty$ in $\bH_r$,\smallskip

\item[(RH3)] if $\Delta\subset \bC^*$ is a convex sector with $\partial \Delta=\{r_+\}\cup\{r_-\}$ then
\[Y_{r+}(\epsilon)=Y_{r_-}(\epsilon)\cdot \bS(\Delta) \text{ for } \epsilon\in \bH_{r_+}\cap \bH_{r_-}.\]
\end{itemize}

Note that the canonical half-plane solutions of Theorem \ref{bjl} satisfy (RH1) by definition,   and (RH3) holds by the definition of the  Stokes factors. The condition (RH2) follows from the fact that the equation \eqref{equation} has a regular singularity at $\epsilon=\infty$, so that solutions have moderate growth at this point.

\subsection{Stability conditions and DT invariants}
\label{stab}

Let $\D$ be a $\bC$-linear triangulated category of finite type.  
We assume for simplicity that the Grothendieck group
\[\Gamma:=K_0(\D)\isom \bZ^{\oplus n}\] is free of finite rank. The  expression\[ \big\langle[E],[F]\big\rangle=\sum_i (-1)^i \dim_{\bC} \Hom^i(E,F[i]),\]
defines a bilinear form $\<-,-\>\colon \Gamma\times\Gamma\to \bZ$ known as the \emph{Euler form.}

The data of a stability condition on  $\D$ consists of a group homomorphism $Z\colon \Gamma\to \bC$ called the \emph{central charge}, and  for each $\phi\in \bR$ a full subcategory $\cP(\phi)\subset \D$ whose objects are said to be \emph{semistable} of phase $\phi$.   This data is required to satisfy a simple set of axioms \cite{Stab}. The important fact is then that the space $\Stab(\D)$ of all such stability conditions on the category $\D$ is a complex manifold, and the forgetful map
\begin{equation}
\label{cov}\varpi\colon \Stab(D)\to \Hom_{\bZ}(\Gamma,\bC)\end{equation}
sending a stability condition to its central charge is a local isomorphism of complex manifolds.

To be able to define Donaldson-Thomas (DT) invariants we must assume that the category $\D$ satisfies  the  CY$_3$ condition
\[\Hom_{\D}^i(A,B)\isom \Hom^{3-i}_{\D}(B,A)^*.\]
This implies that the Euler form 
  is skew-symmetric. Thus in the terminology of Section \ref{prejoyce} the manifold $\Stab(\D)$ is naturally equipped with  a period structure with skew-form. Our aim is to use  DT theory to enrich this to a Joyce structure.

Let us fix a stability condition $\sigma\in \Stab(\D)$. Given further assumptions on the pair $(\D,\sigma)$  it is possible to define DT invariants
$\DT_{\sigma}(\gamma)\in \bQ$ for each class $\gamma\in \Gamma$. 
In the simplest case, when there are no strictly-semistable objects of class $\gamma\in \Gamma$, and the moduli stack $\cM^{\sigma}(\gamma)$ of semistable objects is smooth, the  invariant $\DT_\sigma(\gamma)$ coincides up to sign with the Euler characteristic of the coarse moduli space of  $\cM^{\sigma}(\gamma)$ viewed as a complex manifold. The definition in the general case is due to Joyce and Song \cite{JS}, and Kontsevich and Soibelman \cite{KS}. There is an equivalent system of  invariants $\Omega_\sigma(\gamma)\in \bQ$ defined by
\[\DT_\sigma(\alpha)=\sum_{\alpha=k\beta} \frac{1}{k^2}\cdot \Omega_\sigma(\beta),\]
These appear in physics as BPS invariants, and in many cases are known  to be integers.

\subsection{The wall-crossing formula}
\label{wcf}
Continuing with the notation of the previous section, the next step is to consider the dependence of the DT invariants  on the stability condition $\sigma\in \Stab(\D)$.  It turns out that for a fixed class $\gamma\in \Gamma$, the invariant $\DT_\sigma(\gamma)$ is constant in the complement of a collection of real codimension-one submanifolds in the space $\Stab(\D)$, but jumps discontinuously as the stability condition crosses one of these walls. Joyce \cite{J1,JS}, and Kontsevich and Soibelman \cite{KS}, were able to describe this wall-crossing behaviour exactly, in such a way that knowledge of all invariants $\DT_\sigma(\gamma)$ at one point $\sigma\in \Stab(\D)$  determines them at all other points. Even more remarkably, in the formulation of \cite{KS}, the resulting wall-crossing formula is exactly the iso-Stokes condition for a family of differential equations  of the form \eqref{equation}, but with the finite-dimensional group $\GL_n(\bC)$ replaced by an infinite-dimensional group of Poisson automorphisms of the space $(\bC^*)^n$.

To explain this in more detail,  introduce the algebraic torus
\[\bT=\Hom_\bZ(\Gamma,\bC^*)\isom (\bC^*)^n,\qquad 
\bC[\bT]=\bigoplus_{\gamma\in \Gamma} \bC\cdot x_\gamma,\]
whose character lattice is $\Gamma$. The Euler form defines an invariant Poisson structure on $\bT$ given on characters by
 \[\{x_\alpha,x_\beta\}=\<\alpha,\beta\> \cdot x_{\alpha+\beta}.\]
To make connection with the material of Section \ref{stokes} we would like to 
consider the group \[G=\operatorname{Aut}_{\{-,-\}}(\bT)\] of algebraic Poisson automorphisms of the variety $\bT$. Note that the group structure on the torus $\bT$ plays no role in this definition. The corresponding Lie algebra $\fg$ consists of algebraic vector fields on $\bT$ whose flows preserve the Poisson structure. The fact that these objects are infinite-dimensional will make some aspects of the following discussion heuristic. Precise discussions can be found in \cite{RHDT}.

For simplicity we assume that the form $\<-,-\>$ is non-degenerate. There is then a root decomposition
 \begin{equation}
 \label{lie}\fg=\operatorname{Vect}_{\{-,-\}}(\bT)=\fh\oplus \fgod,\end{equation}
 where the  Cartan subalgebra
$\fh\isom \Hom_\bZ(\Gamma,\bC)$
consists of translation-invariant vector fields on $\bT$, and the subspace $\fgod\subset \fg$ consists of Hamiltonian vector fields, and can be identified with the Poisson  algebra of  non-constant algebraic functions on $\bT$:
\[ \fgod=\bigoplus_{\alpha\in \Gamma\setminus\{0\}} \fg_\alpha = \bigoplus_{\alpha\in \Gamma\setminus\{0\}} \bC\cdot x^\alpha.\]

Fix a stability condition $\sigma\in \Stab(\D)$. For  each ray $\ell\subset \bC^*$ we can attempt to define  an automorphism of $\bT$ by taking the time 1 Hamiltonian flow of the corresponding DT generating function viewed as a regular function on $\bT$. The action of this automorphism on characters is
\begin{equation}
\label{st}\bS_\sigma(\ell)^*(x_\beta)=\exp \Big\{ -\hspace{-.6em}\sum_{Z(\gamma)\in \ell}  \DT_\sigma(\gamma)\cdot x_\gamma,-\Big\}(x_\beta)=x_\beta\cdot \prod_{Z(\gamma)\in \ell} (1-x_\gamma)^{\Omega_\sigma(\gamma)
\cdot \<\gamma,\beta\>}.\end{equation}
Since the sum and product here could be infinite, making rigorous sense of this requires further work \cite{KS,RHDT}. Let us call a ray $\ell\subset \bC^*$ active if  $\bS(\ell)$ is not the identity.
The wall-crossing formula can now be stated as follows: for any convex sector $\Delta\subset \bC^*$ the  product
\[\bS_{\sigma}(\Delta)=\prod_{\ell\in \Delta} \bS_\sigma(\ell)\in G\]
is  constant as $\sigma\in \Stab(\D)$ varies, providing the boundary of $\Delta$ remains non-active.

\subsection{Joyce structures}

Comparing the results of the last two subsections we come to the remarkable conclusion that the wall-crossing formula is the isomonodromy condition for a family of meromorphic connections on $\bP^1$  of the form
\begin{equation}
\label{conn}\nabla = d - \bigg( \frac{Z}{\epsilon^2} + \frac{\Ham_F}{\epsilon}\bigg) d\epsilon,\end{equation}
parameterised by the points of $\Stab(\D)$ and taking values  in the group $G=\Aut_{\{-,-\}}(\bT)$ of Section \ref{wcf}. Here $Z$ and $\Ham_F$ are constant elements of the Lie algebra \eqref{lie} such that

\begin{itemize}
\item[(i)] $Z\in \fh$ is the central charge $Z\colon \Gamma\to \bC$, viewed as an invariant vector field on $\bT$,\smallskip

\item[(ii)] $\Ham_F\in\fgod$ is the Hamiltonian vector field of a regular function $F=\sum_{\gamma\in \Gamma} F_\gamma \cdot x^\gamma$.
\end{itemize}

Let us choose a  basis $(\gamma_1,\cdots,\gamma_n)\subset \Gamma$ giving co-ordinates $z_i=Z(\gamma_i)$  on $\Stab(\D)$, and $\theta_i=\theta(\gamma_i)$  on its tangent spaces, which are all identified with the fixed vector space $\Hom_{\bZ}(\Gamma,\bC)$ by the derivative of the period map \eqref{cov}.  Set $\eta^{ij}=\<\gamma_i,\gamma_j\>$.

Let $X$ be the total space of the tangent bundle of $\Stab(\D)$ and define  a holomorphic function $W\colon X\to \bC$ by the formula \[W(z_i,\theta_j)=\sum_{\gamma\in \Gamma^\times} F_\gamma(z_1,\cdots,z_n) \cdot \frac{e^{\theta(\gamma)}}{Z(\gamma)}.\]
The variation of $F=F(Z)$ is controlled by the  isomonodromy equation \eqref{isomon}, which in the Lie algebra \eqref{lie}  takes the form
\[dF_\gamma=\sum_{\alpha+\beta=\gamma} \<\alpha,\beta\> \cdot F_\alpha F_\beta\cdot d\log Z(
\beta).\]
 When written in terms of the Joyce function $W$ this becomes
\begin{equation}
\label{point}
\frac{\partial^2 W}{\partial \theta_i \partial z_j}-\frac{\partial^2 W}{\partial \theta_j \partial z_i }=\sum_{p,q} \eta^{pq} \cdot \frac{\partial^2 W}{\partial \theta_i \partial \theta_p} \cdot \frac{\partial^2 W}{\partial \theta_j \partial \theta_q},\end{equation}
which obviously implies the equation \eqref{pde}. 

Reconstructing the connection \eqref{conn}, and hence the Joyce function $W$, from the Stokes data \eqref{st} involves solving a Riemann-Hilbert boundary-value problem analogous to the one stated in Section \ref{stokes}. In the context of the infinite-dimensional Lie algebra \eqref{lie} these problems were considered in detail in \cite{RHDT,RHDT2}.

\end{document}


\section{Examples}
\label{four}

We discuss here a couple of examples, also considered in \cite{RHDT2}. The first one illustrates the difference between the notion of a Joyce structure from \cite{RHDT2}, and the strong Joyce structures defined above.

\subsection{A$_1$ example}
\label{a1}

Let us take the lattice $\Gamma=\bZ\cdot \gamma$ and consider the space $\bC^*$ with period map $\varpi(z)(\gamma)=z$. This occurs as the quotient of the space of stability conditions on the CY$_3$ category $\cD$ associated to the A$_1$ quiver by the double shift. Let $\Gamma^\vee=\Hom_{\bZ}(\Gamma,\bZ)$ be the dual lattice. We can write $\Gamma^\vee=\bZ\cdot \gamma^\vee$ with $\gamma^\vee(\gamma)=1$. Since a skew-symmetric form on $\Gamma$  is necessarily zero, it is natural to introduce the  doubled lattice $\Gamma_D=\Gamma\oplus \Gamma^\vee$ with non-degenerate skew-symmetric form defined by $\<\gamma^\vee,\gamma\>=1$. We can then consider an expanded space $M=\bC^*\times \bC$ with period map \[\varpi\colon M\to \Hom_{\bZ}(\Gamma_D,\bC), \qquad \varpi(z,z^{\vee})(m\gamma+n\gamma^\vee)=mz+nz^\vee.\]
We then have all the ingredients (P1)-(P4). As usual, the co-ordinates $(z,z^\vee)$ naturally induce co-ordinates $(z,z^\vee,\theta,\theta^\vee)$ on the total space $X$ of the tangent bundle $\cT_M$ by writing a tangent vector in the form $\theta\cdot \frac{\partial}{\partial z}+ \theta^\vee \cdot \frac{\partial}{\partial z^\vee}$.

Although the space $M$ is not precisely a space of stability conditions, this example was considered in \cite{RHDT2} where it was argued that the Donaldson-Thomas theory of the category $\cD$ should  induce a weak version of a  Joyce structure. This is  given by the construction of Example \ref{eg}  using the function \[W(z,z^\vee, \theta,\theta^\vee)=\frac{\theta^3}{12\pi i z}.\]
Thus the $-i$ eigenspace for the operator $I$ is spanned by the vector fields
\[h=\frac{\partial}{\partial z} + \frac{1}{2\pi i z}\cdot \frac{\partial}{\partial z^\vee}, \qquad h^\vee=\frac{\partial}{\partial z^\vee}.\] 
As required by axioms (J2), (J3), the  function $W$ is odd in the co-ordinates $(\theta,\theta^\vee)$, and homogeneous of degree $-1$ in the co-ordinates $(z,z^\vee)$. But the second derivatives $\frac{\partial^2 W}{\partial \theta^2}$ are not invariant under the transformations $\theta\mapsto \theta+1$. Thus the last condition (J5) in the  definition of a Joyce structure is not satisfied. It is for this reason that in \cite{RHDT2} the requirement in the definition is that the \emph{third} derivatives of $W$ in the fibre directions should be periodic.

In fact something much stronger than periodicity holds in this example, but with respect to a different flat connection on the base $M$. Instead of using the co-ordinates $(z,z^\vee)$ on $M$ which are flat for the connection $\nabla^\Gamma$, we should consider the restriction \eqref{linearjoyce} of the Levi-Civita connection to the zero section $M\subset X$. It is given explicitly by
\[\nabla_{\frac{\partial}{\partial z}} \Big(\frac{\partial}{\partial z}\Big)=-\frac{1}{2\pi i z} \cdot \frac{\partial}{\partial {z^\vee}}, \qquad \nabla_{\frac{\partial}{\partial z}} \Big(\frac{\partial}{\partial z^\vee}\Big)=\nabla_{\frac{\partial}{\partial {z^\vee}}} \Big(\frac{\partial}{\partial z}\Big)=\nabla_{\frac{\partial}{\partial z^\vee}} \Big(\frac{\partial}{\partial z^\vee}\Big)=0.\]
The corresponding flat co-ordinates are
\[a=z, \qquad b=z^{\vee}-\frac{1}{2\pi i}\big(z\log(z)-z\big).\]

We can use these co-ordinates to induce co-ordinates $(a,b,\theta_a,\theta_b)$ on $X$ in the usual fashion by writing a tangent vector in the form $\theta_a\cdot \frac{\partial}{\partial a}+ \theta_b\cdot \frac{\partial}{\partial b}$. Identifying this with the previous expression $\theta\cdot \frac{\partial}{\partial z}+ \theta^\vee \cdot \frac{\partial}{\partial z^\vee}$ we obtain
\[\theta_a=\theta, \qquad \theta_b=\theta^\vee-\frac{\theta}{2\pi i}\cdot \log(z).\]
Performing a change of co-ordinates from $(z,z^\vee,\theta,\theta^\vee)$ to the new system $(a,b,\theta_a,\theta_b)$ we find that the $-i$ eigenspace for the operator $I$ is spanned by the vector fields
\[h_a=\frac{\partial}{\partial a}, \qquad h_b=\frac{\partial}{\partial b}.\]
Thus the  corresponding function $W(a,b,\theta_a,\theta_b)$ can be taken to be identically zero, and  the complex \hk structure is flat.

The only information in the Joyce structure in this example, beyond the data (P1)-(P4) coming from standard properties of the space of stability conditions, is the restriction of the Levi-Civita connection $\nabla$ to the zero-section $M\subset X$. This can be encoded by a single function $\cF\colon M\to \bC$ defined by the relation
\[\frac{\partial ^3\cF}{\partial z_i \partial z_j \partial z_k} = \frac{\partial^3 W}{\partial \theta_i \partial \theta_j \partial \theta_k}\Big|_{\theta=0},\]
which was called the prepotential in \cite{RHDT2}. In the present case we have \[\cF(z,z^\vee)=\frac{1}{4\pi i}z^2\log(z).\]
Similar remarks to those of this subsection apply to all the uncoupled examples appearing in \cite{RHDT2}. In all cases the function $W\colon X\to \bC$ is cubic in the fibre co-ordinates and the resulting complex \hk structure is flat.

\subsection{A$_2$ example}
\label{a2}
We describe an explicit example coming from \cite{A2}.\footnote{Requires more work}  We set \begin{equation*}X=\bigg\{(a,b,q,p,r)\in \bC^5: p^2=q^3+aq+b\text{ and } 27a^2+4b^3 \neq 0,p\neq 0\bigg\},\end{equation*}
with the obvious projection to
\begin{equation*}M=\big\{(a,b)\in \bC^2: 27a^2+4b^3\neq 0, \big\}.\end{equation*}
We define vector fields
\begin{equation*}
v_1=\frac{\partial}{\partial r}, \qquad h_1=\frac{\partial}{\partial b}
+\frac{r}{2p^2} \frac{\partial}{\partial r},\end{equation*}
\begin{equation*} v_2=2p  \frac{\partial}{\partial q},
\qquad h_2=\frac{\partial}{\partial a}-q\frac{\partial}{\partial b}-\frac{r}{p}\frac{\partial}{\partial q}
-\frac{r^2}{2p^3} (3q^2+a) \frac{\partial}{\partial r},\end{equation*}
with the variation of $p$ left implicit. We
 define $I,J,K$ and the metric $g$ by the formulae above. One can easily check $[v_1,v_2]=0$ and $[h_1,h_2]=0$\footnote{a sign error here?}.
 
 Let us take co-ordinates $(a,b)$ on $M$ and corresponding co-ordinates $(\theta_a,\theta_b)$ on the tangent spaces $\cT_{M,p}$ by writing a tangent vector as $v=\theta_a \cdot \frac{\partial}{\partial a}+ \theta_b\cdot \frac{\partial}{\partial b}$. A developing map $F\colon X\to \cT_M$ commutes with the projection to $M$ and has the defining property
  \[F_*\Big(\frac{\partial}{\partial r}\Big)=\frac{\partial}{\partial \theta_b}, \qquad F_*\Big(  \frac{\partial}{\partial q}\Big)=\frac{1}{2p}\frac{\partial}{\partial \theta_a}-\frac{q}{2p}\frac{\partial}{\partial \theta_b}.\]

If we parameterise the elliptic curve $p^2=q^3+aq+b$ using the Weierstrass $\wp$-function as $(q,p)=(\wp(u),\half\wp'(u))$ then a suitable choice for $F$ is
\[\theta_a=u, \qquad \theta_b=r-\zeta(u),\]
where $\zeta(u)$ denotes the Weierstrass zeta function, defined by the relations $\zeta'(u)=-\wp(u)$ and $\zeta(-u)=-\zeta(u)$. 

As explained in \cite{A2} the above  map $F$ is in fact very natural: it gives the holonomy map for line bundles with connection on the elliptic curve $y^2=x^3+ax+b$. Pushing forward the complex \hk structure on $X$ using the developing map $F$ gives a complex \hk structure on the total space $X=\cT_M$ for which the projection map $\pi\colon X\to M$ is a based affine symplectic fibration. It is therefore given by the construction of Example \ref{eg} for some choice of function $W\colon \cT_M\to \bC$.


\begin{thebibliography}{101}

\bibitem[BJL79]{BJL} W. Balser, W.B. Jurkat and D.A. Lutz, Birkhoff invariants and Stokes' multipliers for meromorphic
linear differential equations, Journal of Mathematical Analysis and Applications,  71(1), (1979), 48--94.

\bibitem[BP85]{BP} C. Boyer and J. Pleba{\~n}ski, An infinite hierarchy of conservation laws and nonlinear superposition principles for self-dual Einstein spaces, J. of Math. Physics 26, 229 (1985)


\bibitem[Br07]{Stab} T. Bridgeland,  Stability conditions on triangulated categories, Ann. of Math.  166, 31--345 (2007).

\bibitem[VTL12]{VTL} T. Bridgeland and V. Toledano Laredo, Stability conditions and Stokes factors, Invent. Math. 187 (2012), no. 1, 61--98.

\bibitem[Br19a]{RHDT} T. Bridgeland, Riemann-Hilbert problems from Donaldson-Thomas theory,  Invent. Math. 216, 69--124 (2019).

\bibitem[Br19b]{RHDT2} T. Bridgeland, Geometry from Donaldson-Thomas invariants,  preprint arXiv:1912.06504

\bibitem[BBS19]{BBS19} T. Bridgeland, A. Barbieri and J. Stoppa, A quantized Riemann-Hilbert problem in Donaldson-Thomas theory, preprint arXiv:1905.00748 (to appear IMRN).


\bibitem[Br20]{A2} T. Bridgeland, On the monodromy of the deformed cubic oscillator, with an appendix by D. Masoero, preprint arXiv:2006.10648.


\bibitem[D20]{Dunajski} M. Dunajski, Null K\" ahler geometry and isomonodromic deformations, preprint arXiv:2010.11216.

\bibitem[DT01]{DT} M. Dunajski and K.P. Tod, Einstein--Weyl Structures from hyper-K{\"a}hler Metrics with conformal Killing vectors, Diff. Geom. Appl. 14, 39--55 (2001).

\bibitem[FZ89]{FZ89} D.B. Fairlie and C. Zachos, Infinite-dimensional algebras, sine brackets, and $SU(\infty)$, Phys. Lett. B 224 (1-2), 101-107 (1989).

\bibitem[GMN10]{GMN1} D. Gaiotto, G. Moore and A. Neitzke, Four-dimensional wall-crossing via three-dimensional field theory, Comm. Math. Phys. 299, no. 1, 163--224 (2010).

\bibitem[GMN13]{GMN2} D. Gaiotto, G. Moore and A. Neitzke, Wall-crossing, Hitchin systems, and the WKB approximation, Adv. Math. 234, 239--403 (2013).


\bibitem[G97]{Gin} S.G. Gindikin, A construction of hyper-K{\"a}hler metrics, Functional Analysis and its Applications 20:3, 238--240 (1986).


\bibitem[Jo08]{J1} D. Joyce, Configurations in abelian categories. IV. Invariants and changing stability conditions, Adv. in Math. 217, 125--204 (2008).

\bibitem[JS12]{JS} D. Joyce and Y. Song, A theory of generalized Donaldson-Thomas invariants, Mem. Amer. Math. Soc. 217, no. 1020, iv+199 pp (2012).

\bibitem[Jo07]{HolGen} D. Joyce, Holomorphic generating functions for invariants counting coherent sheaves on Calabi-Yau 3-folds, Geom. Topol. 11, 667--725 (2007). 


\bibitem[KKO01]{KKO01} A. Kapustin, A. Kuznetsov, A. and D. Orlov, Noncommutative Instantons and Twistor Transform, Commun. Math. Phys. 221, 385-432 (2001).

\bibitem[KS08]{KS} M. Kontsevich and Y. Soibelman, Stability structures, motivic Donaldson-Thomas invariants and cluster transformations, arxiv 0811.2435.

\bibitem[LeB83]{LeBrun} C. LeBrun, Spaces of complex null geodesics in complex-Riemannian geometry, Trans. A.M.S. 278(1), 209-231 (1983).

\bibitem[MN89]{MN} L.J. Mason and E.T. Newman, A connection between the Einstein and Yang-Mills equations, Commun. Math. Phys. 121, 659--668 (1989).

\bibitem[Pa90]{Park} Q. Han Park, Self-dual gravity as a large-N limit of the 2D non-linear sigma model, Phys. Lett. 238B, 287--90 (1990).

\bibitem[Pen76]{Pen} R. Penrose, Non-linear gravitons and curved twistor theory, General Relativity and Gravitation, 7, 31--52 (1976).

\bibitem[Pleb75]{Pleb} J.F. Pleba{\~n}ski, Some solutions of complex Einstein equations, J. Math. Phys 16, 2395 --2402 (1975)

\bibitem[S92]{S92} I.A.B. Strachan, The Moyal algebra and integrable deformations of the self-dual Einstein equations, Phys. Lett. B 283 (1-2), 63-66 (1992).

\bibitem[S97]{S97} I.A.B. Strachan, A geometry for multidimensional integrable systems, J. Geom. and Phys. 21 (3), 255-278 (1997)
	
\bibitem[T89]{Tak} K. Takasaki, An infinite number of hidden variables in hyper-K{\"a}hler metrics, J. Math. Phys (30), 1515--1521 (1989).

\bibitem[T94]{Tak94} K. Takasaki, Dressing operator approach to Moyal algebraic deformations of selfdual gravity, J. Geom. and Phys. 14 (3), 111- (1994).

\bibitem[W85]{Ward} R.S. Ward, Integrable and Solvable systems, and relations among them, Phil. Trans. Roy. Soc. A, (315), issue 1533, 451-457 (1985).


\end{thebibliography}
\end{document}